%% file: main.tex
\documentclass[11pt]{article}
\usepackage[a4paper,margin=2.6cm]{geometry}
\usepackage{amsmath,amssymb,amsthm,mathtools}
\usepackage[round,authoryear]{natbib}
\usepackage{graphicx,booktabs,xcolor,microtype}
\usepackage{placeins}
\usepackage[font=small,labelfont=bf]{caption}
\usepackage[colorlinks=true,citecolor=blue!50!black,
  linkcolor=blue!50!black,urlcolor=blue!50!black]{hyperref}
\usepackage[authoryear,round]{natbib}

\newif\ifshowtodo \showtodotrue

\makeatletter
\def\@fnsymbol#1{\ensuremath{\ifcase#1\or*\or\dagger\or
  \ddagger\fi}}
\makeatother

\newtheorem{theorem}{Theorem}
\newtheorem{proposition}{Proposition}
\newtheorem{lemma}{Lemma}
\newtheorem{corollary}{Corollary}
\theoremstyle{definition}
\newtheorem{remark}{Remark}
\newtheorem{openproblem}{Open problem}

\newcommand{\R}{\mathbb{R}}
\newcommand{\E}{\mathrm{E}}
\newcommand{\JB}{\mathrm{JB}}
\newcommand{\Disc}{\operatorname{Disc}}
\newcommand{\sph}{\Omega_n}
\newcommand{\EK}{\boldsymbol{K}}
\newcommand{\EF}{\boldsymbol{F}}

\title{Exact finite-sample geometry of the Jarque--Bera statistic:\\ support, discriminants and elliptic integrals} \author{Steve Lawford\thanks{ENAC (University of Toulouse), 7 avenue Edouard Belin, BP 54005, 31055 Toulouse Cedex 4, France (email: steve.lawford@enac.fr).}}

\date{}

\begin{document}
\maketitle

\begin{abstract}
\noindent Under Gaussian sampling, removing location and scale
turns the sample into a direction that is uniformly
distributed on a sphere of dimension $n-2$, and sample
skewness and kurtosis become a cubic--quartic polynomial image
of spherical measure. We use this geometry to obtain exact
finite-sample results for the Jarque--Bera statistic $\JB_n$.
First, for every $n\ge 3$ the largest attainable value of
$\JB_n$ is $n\{(n-2)^4+4(n-1)^2\}/\{24(n-1)^2\}$, attained by
a single-outlier sample, so the asymptotic $10\%$ and $5\%$
tests have size zero for $n\le 6$ and the $1\%$ test for
$n\le 7$; near this maximum the density of $\JB_n$ behaves
like an explicit multiple of $(J_n^+-x)^{(n-4)/2}$. Second,
passing from residual coordinates to power sums writes the
joint density of skewness and kurtosis as an integral of the
reciprocal square root of a polynomial discriminant; for every
$n\ge 5$ the innermost integral runs over a single interval
and is a Lauricella $F_D$ period. Third, this yields explicit
laws: an arcsine law for $n=3$, an algebraic joint density for
$n=4$, and a single complete elliptic integral, equivalently
${}_2F_1(\tfrac12,\tfrac12;1;\cdot)$, for $n=5$.
Residual-coordinate collisions generate the discriminant
singularities of the joint law, while stationary points of
$\JB_n$ on the residual sphere govern the singularities of its
one-dimensional density.
\end{abstract}

\noindent\textbf{Keywords:} Jarque--Bera test; sample skewness;
sample kurtosis; exact distribution; discriminant; elliptic
integral; Lauricella function.\\
\textbf{MSC 2020:} 62E15, 62F03, 33C65, 33E05.

\section{Introduction}\label{sec:intro}

For observations $X_1,\dots,X_n$ let $m_r=n^{-1}\sum_{i=1}^n(X_i-\bar X)^r$, $S_n=m_3/m_2^{3/2}$ and $K_n=m_4/m_2^2$. The Jarque--Bera statistic \citep{JB1980,JB1987},
\begin{equation}\label{eq:JB}
\JB_n=\frac n6\Bigl\{S_n^2+\tfrac14(K_n-3)^2\Bigr\},
\end{equation}
is one of the canonical moment-based diagnostics for Gaussianity, and under independent normal sampling it converges in distribution to $\chi^2_2$. Its finite-sample law has a geometry that is absent from the limiting distribution. For every fixed $n$, its support is bounded, and at the smallest sample sizes the usual asymptotic critical values lie beyond the attainable range of the statistic, so the corresponding asymptotic test has exact size zero (Theorem~\ref{thm:range}, Corollary~\ref{cor:size}).

The mechanism is simple. After location and scale are
removed, a Gaussian sample is a uniformly distributed
direction on a residual sphere, so $(S_n,K_n)$ is the image
of spherical measure under a cubic--quartic polynomial map
(Section~\ref{sec:sphere}). Passing from residual coordinates
to power sums introduces the Vandermonde determinant, and the
joint density of $(S_n,K_n)$ becomes an integral of the
reciprocal square root of a polynomial discriminant
(Theorem~\ref{thm:disc}).

This representation gives exact finite-sample results and
exposes a hierarchy of algebraic and special-function
structure. The innermost integral is a Lauricella period for
every $n\ge5$ (Theorem~\ref{thm:fibre}); the joint density is
algebraic for $n=4$ (Theorem~\ref{thm:n4}) and a single
complete elliptic integral for $n=5$ (Theorem~\ref{thm:n5});
and the singularities have two geometric sources: collisions
among residual coordinates determine the discriminant
singularities of the joint law, while stationary points of
$\JB_n$ on the residual sphere determine the singularities of
the scalar $\JB_n$ density (Propositions~\ref{prop:tail}
and~\ref{prop:JB4sing}).
Section~\ref{sec:JB} converts these results into distribution
functions and quantiles of $\JB_n$, and
Section~\ref{sec:open} discusses larger $n$.

Exact theory has a long history. \citet{Fisher1930}
introduced recurrences in $n$ for the standardised sample
cumulants, obtained exact moments of their joint
distribution, found the density of $S_n$ for $n=3$, and used
the spherical representation, as did \citet{Mulholland1977}.
\citet{McKay1933} showed that for $n=4$ the density of $S_n$
is a complete elliptic integral, which he expressed as a Gauss
hypergeometric function, and \citet{Geary1947} developed the
recurrence for the density of $S_n$. \citet{Mulholland1970}
gave a general theory of the singularities of the density of
a function of a point uniformly distributed on a sphere,
which he applied to $S_n$ \citep{Mulholland1977}. For $K_n$,
\citet{AnscombeGlynn1983} gave an accurate approximation, and
\citet{ShentonBowman1977} an approximate bivariate model,
combining a Johnson $S_U$ marginal for $S_n$ with a gamma
conditional density for $K_n$ above the parabola $K_n=1+S_n^2$.
The main exact result for the joint law is due to
\citet{NHN2016}, hereafter NHN, who derive recurrences in $n$
for the joint density $h_n(s,k)$ of $(S_n,K_n)$ and for all
product moments, and hence the exact moments of $\JB_n$;
NHN leave the numerical evaluation of the density recurrence,
and hence graphical display of the joint law, for future work,
and \citet{NHO2021} have since approximated the density of
$S_n$ by moment-based Fourier cosine series. Critical values
of $\JB_n$ have been tabulated or fitted by response surfaces
\citep{DebSefton1996,Lawford2005}, and \citet{Urzua1996}
proposed a finite-sample restandardisation. A separate
literature bounds $S_n$ and $K_n$ algebraically in terms of
$n$ \citep{Wilkins1944,Kirby1974,JohnsonLowe1979,Dalen1987,
SharmaBhandari2015}; \citet{Cox2010} notes that these limits
have been repeatedly rediscovered. For $n=4$,
\citet{DeMicheleDeBartolo2026} observed in simulations that
the admissible $(S_4,K_4)$ region is deltoid-shaped, and
\citet{DeBartolo2026} have derived its boundary analytically.

Relative to these results, the contributions of the present
paper are the closed-form range of $\JB_n$ for all $n$
and its consequence for the size of the asymptotic test,
neither of which we have found stated previously; the
discriminant and Lauricella structure, in which the $n=4$
boundary appears as the discriminant locus of a quartic that
also carries the exact density on its interior; and complete
explicit low-dimensional laws together with their singularity
structure, which neither the recurrence nor the bounds
literature displays. NHN already give an exact recursive
characterisation of the joint law, and \citet{DeBartolo2026}
independently derive the $n=4$ boundary.

\section{Spherical representation and the range of
\texorpdfstring{$\JB_n$}{JBn}}\label{sec:sphere}

Let $X=(X_1,\dots,X_n)'$ have independent $N(\mu,\sigma^2)$
entries, $M=I_n-n^{-1}\mathbf 1\mathbf 1'$ and $Y=MX$. Put
$H_n=\{u\in\R^n:\mathbf 1'u=0\}$ and
$\sph=\{u\in H_n:\|u\|=1\}$, a sphere of dimension $n-2$. For
$u\in\R^n$ write $p_r(u)=\sum_{i=1}^n u_i^r$, so that
$p_1=0$ and $p_2=1$ on $\sph$.

\begin{lemma}\label{lem:sphere}
$Y\neq 0$ almost surely, $U=Y/\|Y\|$ is uniformly distributed
on $\sph$, and $U$ is independent of $\|Y\|$.
\end{lemma}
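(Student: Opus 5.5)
The plan is to reduce the statement to the rotational invariance of a standard Gaussian vector on the hyperplane $H_n$. First remove $\mu$ and $\sigma$. Since $M\mathbf 1=0$, we have $Y=MX=\sigma MZ$, where $Z=(X-\mu\mathbf 1)/\sigma$ has independent $N(0,1)$ entries. Then $U=MZ/\|MZ\|$ and $\|Y\|=\sigma\|MZ\|$, so it suffices to prove everything for $W=MZ$ in place of $Y$.

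Next I will identify the law of $W$. $M$ is the orthogonal projection onto $H_n$, which has dimension $n-1$. Choose an orthogonal matrix $Q=(q_1,\dots,q_{n-1},n^{-1/2}\mathbf 1)$ whose first $n-1$ columns $Q_1$ form an orthonormal basis of $H_n$, so that $M=Q_1Q_1'$. Then $V=Q_1'Z\sim N(0,I_{n-1})$, because $Q'Z\sim N(0,I_n)$ by orthogonal invariance. Moreover $W=Q_1V$, and $Q_1$ is an isometry from $\R^{n-1}$ onto $H_n$. This gives $\|W\|=\|V\|$ and $U=Q_1(V/\|V\|)$. The event $W=0$ is the event $V=0$, which has probability zero since $V$ has a density. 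So $Y\neq0$ almost surely.

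It then remains to prove the standard fact that for $V\sim N(0,I_{n-1})$, the direction $V/\|V\|$ is uniform on $S^{n-2}$ and independent of $\|V\|$. I plan to use polar coordinates. The density of $V$ is proportional to $e^{-\|v\|^2/2}$. Under $v=r\theta$ we have $dv=r^{n-2}\,dr\,d\sigma(\theta)$, where $\sigma$ is surface measure. The joint density of $(\|V\|,V/\|V\|)$ with respect to $dr\,d\sigma$ is therefore proportional to $r^{n-2}e^{-r^2/2}$. This is a product of a function of $r$ alone and a constant in $\theta$, which gives both uniformity and independence.

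Alternatively, one can avoid polar coordinates. For any orthogonal $R$, the vector $RV$ has the same law as $V$ and $\|RV\|=\|V\|$. Conditionally on the radius, the law of the direction is therefore rotation invariant. Uniqueness of rotation-invariant probability on the sphere then identifies it as uniform, independently of the radius. Finally, uniform measure on $S^{n-2}$ pushes forward under the isometry $Q_1$ to uniform (normalised surface) measure on $\sph$. Here ``uniform on $\sph$'' is understood as the normalised surface measure, which is invariant under the orthogonal maps of $H_n$. The independence transfers because $U$ and $\|Y\|$ are measurable functions of $V/\|V\|$ and $\|V\|$ respectively.

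No step is a real obstacle. The only point needing care is the bookkeeping that $Q_1$ carries $S^{n-2}$ isometrically onto $\sph$, so that ``uniform'' is unambiguous and basis-independent. Rotation invariance settles this, since any two choices of $Q_1$ differ by an orthogonal map of $\R^{n-1}$.
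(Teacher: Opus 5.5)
Your proposal is correct and follows the paper's route. The paper notes that $Y$ is Gaussian with covariance $\sigma^2M$, hence isotropic on the $(n-1)$-dimensional space $H_n$, and invokes the polar decomposition of an isotropic Gaussian vector; your orthonormal basis $Q_1$ with $V=Q_1'Z\sim N(0,I_{n-1})$ and the polar density $r^{n-2}e^{-r^2/2}$ simply make that argument explicit.
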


\begin{proof}
$Y$ is Gaussian with covariance $\sigma^2M$, and $M$ is the
orthogonal projection onto $H_n$, so the law of $Y$ is
isotropic on the $(n-1)$-dimensional space $H_n$. The polar
decomposition of an isotropic Gaussian vector gives the
result.
\end{proof}

\begin{proposition}\label{prop:power}
Under normality, $S_n=\sqrt n\,p_3(U)$, $K_n=n\,p_4(U)$ and
\begin{equation}\label{eq:JBp}
\JB_n=\frac{n^2}{6}\,p_3^2+\frac{n}{24}\,(np_4-3)^2 .
\end{equation}
\end{proposition}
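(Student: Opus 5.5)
The plan is a direct scaling computation. The only probabilistic input is Lemma~\ref{lem:sphere}, which we use to know that $Y\neq0$ almost surely, so that $U=Y/\|Y\|$ is well defined. Everything else is homogeneity of the central moments.

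First, write the central moments in terms of $Y=MX$. By definition $Y_i=X_i-\bar X$, so $m_r=n^{-1}\sum_i Y_i^r=n^{-1}p_r(Y)$. Put $R=\|Y\|>0$, which holds almost surely. Then $Y=RU$, and by homogeneity $p_r(Y)=R^r p_r(U)$. Hence
\[
m_r=n^{-1}R^r p_r(U), \qquad m_2=n^{-1}R^2,
\]
where the second identity uses $p_2(U)=\|U\|^2=1$ on $\sph$.

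Next, substitute these into the definitions of skewness and kurtosis. For skewness,
\[
S_n=m_3/m_2^{3/2}=\frac{n^{-1}R^3p_3(U)}{n^{-3/2}R^3}=\sqrt n\,p_3(U).
\]
For kurtosis,
\[
K_n=m_4/m_2^2=\frac{n^{-1}R^4p_4(U)}{n^{-2}R^4}=n\,p_4(U).
\]
The radius $R$ cancels in both, so the expressions depend on $U$ alone. This is consistent with the independence statement in Lemma~\ref{lem:sphere}, although we do not need that here.

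Finally, insert these expressions into \eqref{eq:JB}. Since $S_n^2=n\,p_3^2$, we get
\[
\JB_n=\frac n6\,n p_3^2+\frac n{24}(np_4-3)^2,
\]
which is exactly \eqref{eq:JBp}.

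There is no genuine obstacle. The only points needing care are the almost-sure nonvanishing of $Y$, so that $U$ and the ratios are defined, and the normalisation $p_2(U)=1$. Both come from Lemma~\ref{lem:sphere} and the definition of $\sph$. The phrase ``under normality'' is needed only to give $U$ its uniform law; the algebraic identities themselves hold for any sample that is not constant.
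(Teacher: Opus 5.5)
Your proposal is correct and follows the paper's proof: write $m_r=\|Y\|^r p_r(U)/n$ and $m_2=\|Y\|^2/n$ using $p_2(U)=1$, and let the radial factors cancel in $S_n$ and $K_n$. Your remark that the identities hold for any non-constant sample, with normality needed only for the uniform law of $U$, is also accurate.
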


\begin{proof}
With $Y_i=\|Y\|U_i$ one has $m_r=\|Y\|^r p_r(U)/n$ and
$m_2=\|Y\|^2/n$; the radial factors cancel.
\end{proof}

Thus the finite-sample problem is the push-forward of uniform
measure on $\sph$ under $u\mapsto(p_3,p_4)$. The following
bounds are classical.

\begin{lemma}\label{lem:bounds}
For every $u\in\sph$, with $S=\sqrt n\,p_3(u)$ and
$K=n\,p_4(u)$,
\begin{equation}\label{eq:bounds}
K\ge 1+S^2,\qquad K\le B_n=\frac{n^2-3n+3}{n-1},\qquad
|S|\le A_n=\frac{n-2}{\sqrt{n-1}} .
\end{equation}
Moreover $A_n^2=B_n-1$, and all three bounds are attained
simultaneously at the single-outlier directions
$u=\pm\{n(n-1)\}^{-1/2}(n-1,-1,\dots,-1)'$ and their
coordinate permutations.
\end{lemma}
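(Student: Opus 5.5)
The plan is to reduce the three inequalities to elementary estimates on the residual sphere $\sph$, using only $p_1=0$ and $p_2=1$.

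\emph{First bound, $K\ge 1+S^2$.} This is Pearson's inequality. I would apply Cauchy--Schwarz to $p_3=\sum_i u_i\cdot u_i^2$, written as the inner product of $u$ with $u^2-c\mathbf 1$ for the constant $c=1/n$. Because $\mathbf 1'u=0$, subtracting $c\mathbf 1$ does not change the inner product, so
\[
p_3^2=\Bigl\{\sum_i u_i\bigl(u_i^2-\tfrac1n\bigr)\Bigr\}^2\le \|u\|^2\sum_i\bigl(u_i^2-\tfrac1n\bigr)^2=p_4-\frac2n p_2+\frac1n=p_4-\frac1n .
\]
Multiplying by $n$ gives $S^2\le K-1$. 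Equality holds exactly when $u^2-\tfrac1n\mathbf 1$ is proportional to $u$. Then each coordinate satisfies a single quadratic, so $u$ takes at most two distinct values.

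\emph{Second bound, $K\le B_n$.} Here I would maximise $p_4$ on $\sph$. First I bound the coordinates: since $\sum_{j\ne i}u_j=-u_i$, Cauchy--Schwarz gives $u_i^2\le (n-1)(1-u_i^2)$, so $u_i^2\le (n-1)/n$. A naive estimate $p_4\le \max_i u_i^2\cdot p_2$ is not sharp, so I would use Lagrange multipliers instead. At a critical point of $p_4$ subject to $p_1=0$ and $p_2=1$, we have $4u_i^3=\lambda+2\mu u_i$ for every $i$. A cubic has at most three roots, so the coordinates take at most three values. A cleaner route is to use the first inequality. Write $p_4=\sum_i u_i^4$ and fix the largest coordinate $a=u_1$. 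The remaining coordinates have sum $-a$ and sum of squares $1-a^2$, and their fourth-power sum is at most $(1-a^2)^2$ (from $\sum x^4\le(\sum x^2)^2$). Hence
\[
p_4\le a^4+(1-a^2)^2 .
\]
This is a convex function of $t=a^2\in[0,(n-1)/n]$, so its maximum is at an endpoint. But at $t=(n-1)/n$ this gives $((n-1)^2+1)/n^2$, which exceeds the target $(n^2-3n+3)/\{n(n-1)\}$, so the estimate is still not sharp. The fix is to sharpen the bound on the rest: for $n-1$ numbers with sum $-a$ and sum of squares $1-a^2$, the maximum fourth-power sum is the same extremal problem in one lower dimension after recentring. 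That suggests induction. I expect this step to be the main obstacle.

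The alternative I would actually commit to is the three-value Lagrange reduction. Since the constraint set is compact, the maximum of $p_4$ is attained at a critical point, whose coordinates take at most three values with multiplicities $k_1,k_2,k_3$. I would then argue that the maximum occurs with one coordinate alone and all others equal. A standard smoothing argument does this: replacing two non-extreme coordinates by values further apart, while preserving their sum and square-sum within the triple, can only increase $p_4$. For the single-outlier vector $u=\{n(n-1)\}^{-1/2}(n-1,-1,\dots,-1)'$, direct computation gives
\[
p_4=\frac{(n-1)^4+(n-1)}{n^2(n-1)^2}=\frac{(n-1)^3+1}{n^2(n-1)},
\]
so that $np_4=(n^2-3n+3)/(n-1)=B_n$, using the factorisation $(n-1)^3+1=n(n^2-3n+3)$.

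\emph{Third bound and attainment.} The bound $|S|\le A_n$ then follows from the first two: $S^2\le K-1\le B_n-1$, and $B_n-1=(n^2-4n+4)/(n-1)=A_n^2$. For attainment, the single-outlier vector takes only two values, so it achieves equality in the Cauchy--Schwarz step. It also attains $B_n$ by the computation above, and hence it attains all three bounds simultaneously, and the same holds for its negative and its coordinate permutations.
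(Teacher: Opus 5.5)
\textbf{Where the proposal stands.} Your proof of $K\ge 1+S^2$ is the paper's argument in Cauchy--Schwarz form; the paper expands $\E(Z^2-aZ-1)^2\ge0$ at $a=S$. Deriving $|S|\le A_n$ from $S^2\le K-1\le B_n-1=A_n^2$ is correct, and tidier than the paper, which cites Wilkins and Kirby for that bound. One small point on attainment: ``takes two values'' is a consequence of Cauchy--Schwarz equality, not the criterion for it. What you need is that both values are roots of one quadratic $t^2-ct-1/n$. This holds because any two-valued $u\in\sph$ has $xy=-1/n$; alternatively, just compute $S=\pm A_n$ directly.

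\textbf{The gap: $K\le B_n$.} Your skewness bound rests on this, so it matters. The Lagrange reduction to at most three values is fine. But the decisive claim, that the maximum sits at a single outlier, is only asserted, and the smoothing move you describe cannot be performed. Two numbers with prescribed sum and sum of squares are fixed up to order, so there is nowhere to move them. ``Further apart'' is also the wrong direction, since the extremal configurations are those with coinciding coordinates. Nothing in the proposal excludes three-valued critical points, or two-valued ones with both multiplicities at least $2$.

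\textbf{How to close it.} The paper itself does not prove this bound; it cites \citet{Dalen1987}, and you could do the same. For a self-contained proof, move three coordinates at once.
\begin{itemize}
\item Holding $x+y+z=s$ and $x^2+y^2+z^2$ fixed keeps $u$ on $\sph$. Newton's identities give $x^4+y^4+z^4=\mathrm{const}+4s\,xyz$.
\item On this circle, $xyz$ is largest when the two smallest entries coincide and smallest when the two largest do. So at a maximiser, every triple with $s>0$ has its two smallest entries equal, and every triple with $s<0$ has its two largest equal.
\item Hence any three distinct values must sum to zero, so there are at most three distinct values.
\item If there are three, $x_1<x_2<x_3$, the triples $(x_1,x_1,x_3)$ and $(x_1,x_3,x_3)$ have sums $x_1-x_2<0$ and $x_3-x_2>0$. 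So $x_1$ and $x_3$ each occur once, and $p_1=0$ forces the values $\pm2^{-1/2}$ once each and $0$ otherwise. This gives $K=n/2<B_n$ for $n\ge4$. The case $n=3$ is trivial, since $K\equiv 3/2=B_3$ there.
\item If there are two, $x_1<x_2$, each with multiplicity at least $2$, the triples give $2x_1+x_2\ge0\ge x_1+2x_2$, i.e.\ $x_1\ge x_2$, a contradiction.
\end{itemize}
Hence the maximiser is a single outlier, and your computation $np_4=B_n$ completes the proof.
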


\begin{proof}
As in \citet{JohnsonLowe1979}, let $Z$ take the values
$\sqrt n\,u_i$ with probability $1/n$ each, so that $\E Z=0$,
$\E Z^2=1$, $\E Z^3=S$, $\E Z^4=K$.
For every real $a$,
$0\le\E(Z^2-aZ-1)^2=K-1-2aS+a^2$; taking $a=S$ gives the
first inequality. It was stated by \citet{Pearson1916} and
proved by \citet{Wilkins1944}, who noted that equality holds
if and only if the data take at most two distinct values.
The bound on $|S|$ was proved by \citet{Wilkins1944} and,
independently, by \citet{Kirby1974}. The bound on $K$ was
stated by \citet{Pearson1916}, who attributed it to
G.\,N.~Watson, and proved by \citet{Dalen1987}, who also
showed that it is attained only at the single-outlier
configurations. Cruder bounds were given by
\citet[p.~357]{Cramer1946} and \citet{JohnsonLowe1979};
\citet{Cox2010} surveys the history. The identity
$A_n^2=B_n-1$ is immediate, and direct substitution of the
single-outlier direction gives $S=\pm A_n$ and $K=B_n$.
\end{proof}

\citet{SharmaBhandari2015} also give the $S$-dependent upper
bound $K\le\frac12\frac{n-3}{n-2}S^2+\frac n2$. For $n=5$
this bound is used below to identify the lower endpoint of
$\JB_5$; for $n=4$ it is compared with the exact support in
Remark~\ref{rem:triangle}.

\begin{theorem}[Range of $\JB_n$]\label{thm:range}
For $n\ge 3$, let $J_n^-$ and $J_n^+$ be the minimum and
maximum of $\JB_n$ over $\sph$. Then
\begin{equation}\label{eq:Jplus}
J_n^+=\frac{n}{24}\bigl(A_n^4+4\bigr)
=\frac{n\{(n-2)^4+4(n-1)^2\}}{24(n-1)^2},
\end{equation}
attained at the single-outlier directions of
Lemma~\ref{lem:bounds}. Further, $J_3^-=9/32$,
$J_4^-=1/6$, $J_5^-=5/96$, and $J_n^-=0$ for $n\ge 6$.
\end{theorem}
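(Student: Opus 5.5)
The upper bound comes from Lemma~\ref{lem:bounds} alone; the lower bound splits by $n$. Write $\JB_n=\frac n6\{S^2+\frac14(K-3)^2\}$ as in Proposition~\ref{prop:power}.

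\emph{Maximum.} Since $K\ge 1+S^2$, we have $S^2\le K-1$. Substituting this,
\[
\JB_n\le\frac n6\Bigl\{K-1+\tfrac14(K-3)^2\Bigr\}=\frac n{24}\bigl\{(K-1)^2+4\bigr\}.
\]
The right-hand side is increasing in $K$ on $K\ge1$, and $K\ge1$ always holds. Using $K\le B_n$ and $B_n-1=A_n^2$ then gives $\JB_n\le \frac n{24}(A_n^4+4)$. Both inequalities are equalities at the single-outlier directions, because those attain $S^2=A_n^2=B_n-1$ and $K=B_n$ simultaneously. Expanding $A_n^4=(n-2)^4/(n-1)^2$ gives the second form in \eqref{eq:Jplus}.

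\emph{Minimum for $n\ge6$.} It suffices to exhibit $u\in\sph$ with $p_3=0$ and $np_4=3$, and I will restrict to sign-symmetric configurations, which force $p_3=0$.
\begin{itemize}
\item For $n=6$, $u=(1,-1,0,0,0,0)'/\sqrt2$ gives $p_4=1/2$, so $K=3$ exactly.
\item For general $n\ge6$, the same two-point direction gives $K=n/2\ge3$. A balanced symmetric direction gives $K\le n/(n-1)<3$: take $\pm c$ in equal numbers, with one zero when $n$ is odd.
\end{itemize}
Join these two directions by a continuous path of symmetric, normalised configurations, for instance by interpolating the pair magnitudes and renormalising. The intermediate value theorem then gives a point with $K=3$ and $S=0$, so $J_n^-=0$.

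\emph{Minimum for $n=3$.} On $H_3$, Newton's identities with $e_1=0$ give $p_4=p_2^2/2=1/2$. Hence $K=3/2$ is constant on $\Omega_3$, while $S$ takes the value $0$, e.g.\ at $(1,-1,0)'/\sqrt2$. Therefore $J_3^-=\frac3{24}(3/2)^2=9/32$.

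\emph{Minimum for $n=4,5$.} Here I use the Sharma--Bhandari bound $K\le\frac12\frac{n-3}{n-2}S^2+\frac n2$. This gives $K\le\frac14S^2+2$ for $n=4$ and $K\le\frac13S^2+\frac52$ for $n=5$. Put $s=S^2$.
\begin{itemize}
\item For $n=4$: $K\le B_4=7/3<3$, so $3-K\ge 1-s/4$. We need $1-s/4\ge0$, which holds because $s\le A_4^2=4/3$. Then $S^2+\frac14(K-3)^2\ge f(s)=s+\frac14(1-s/4)^2$, and $f'(s)>0$ on $[0,4/3]$. So the minimum is $f(0)=1/4$, giving $J_4^-=\frac46\cdot\frac14=1/6$.
\item For $n=5$, when $K\le3$: we have $3-K\ge\frac12-\frac s3$. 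Where $\frac12-\frac s3\ge0$, $f(s)=s+\frac14(\frac12-\frac s3)^2$ is again increasing, with minimum $1/16$ at $s=0$.
\item For $n=5$, when $K>3$ or $\frac12-\frac s3<0$: the Sharma--Bhandari bound forces $s\ge 3/2$, hence the objective is at least $3/2>1/16$.
\end{itemize}
This gives $J_5^-=\frac56\cdot\frac1{16}=5/96$. Attainment in both cases is at $u\propto(1,-1,0,\dots,0)'$, which gives $S=0$ and $K=n/2$, i.e.\ $K=2$ and $K=5/2$ respectively.

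\emph{Main obstacle.} The delicate step is the case analysis for $n=4,5$: checking the sign of $3-K$ across the whole admissible range, so that the Sharma--Bhandari bound is used in the correct direction. A secondary point needing care for $n\ge6$ is that the symmetric interpolating path stays on $\sph$ and varies $K$ continuously.
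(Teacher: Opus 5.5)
Your proof is correct. The maximum and the cases $n=5$ and $n\ge6$ follow the paper's argument essentially verbatim; for $n\ge6$ the paper uses connectedness of the sub-sphere $\{u_{2i}=-u_{2i-1}\}$ where you use an explicit renormalised interpolation, which is equivalent.

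You depart from the paper for $n=3$ and $n=4$. The paper does not prove these cases inside the theorem. It defers to Proposition~\ref{prop:n3}, which uses Fisher's trigonometric parametrisation of $\Omega_3$. It also defers to Proposition~\ref{prop:JB4}, which uses the exact support of Theorem~\ref{thm:n4}, i.e.\ the discriminant curve $Q_4=0$, together with a Lagrange elimination on that curve.

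You instead get $K_3\equiv 3/2$ directly from the Newton identity $p_4=p_2^2/2$ on $H_3$. You then handle $n=4$ with the same Sharma--Bhandari device the paper reserves for $n=5$. It is in fact simpler at $n=4$: $s\le A_4^2=4/3$ makes $1-s/4$ automatically positive, so no case split is needed. The bound is sharp here because the parabola $K=2+S^2/4$ touches the support at $(0,2)$ (Remark~\ref{rem:triangle}).

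Your route makes Theorem~\ref{thm:range} self-contained. It depends only on the classical moment inequalities, with no forward references to the discriminant machinery.

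The paper's route costs more but is not wasted. The boundary analysis for $n=4$ delivers all the boundary critical values $1/6$, $x_*$, $2/3$ and $26/27$ at once, and these are what Proposition~\ref{prop:JB4sing} needs for the singularities of $f_{\JB_4}$. Likewise, Fisher's parametrisation yields the full arcsine law of $\JB_3$, not just its minimum.
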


\begin{proof}
By Lemma~\ref{lem:bounds}, $S^2\le K-1$, so
\[
\JB_n\le\frac n6\Bigl\{K-1+\tfrac14(K-3)^2\Bigr\}
=\frac n{24}\bigl\{(K-1)^2+4\bigr\}
\le\frac n{24}\bigl\{(B_n-1)^2+4\bigr\},
\]
because the middle expression increases in $K\ge 1$. Since
$B_n-1=A_n^2$, the right-hand side is \eqref{eq:Jplus}, and
both inequalities are equalities at the single-outlier
directions. The values of $J_3^-$ and $J_4^-$ are proved in
Proposition~\ref{prop:n3} and Proposition~\ref{prop:JB4}.
For $n\ge 6$ let $m=\lfloor n/2\rfloor$ and consider the
directions with $u_{2i}=-u_{2i-1}$ for $i\le m$ (and $u_n=0$
if $n$ is odd). They form a sphere of dimension $m-1\ge 2$,
hence a connected set, on which $S=0$. On it $K$ is
continuous, equals $n/(2m)\le 7/6$ when all $|u_i|$ with
$i\le 2m$ are equal, and equals $n/2\ge 3$ when a single pair
is non-zero. By the intermediate value theorem $K=3$, and so
$\JB_n=0$, is attained.

For $n=5$, the bound of \citet{SharmaBhandari2015} reads
$K\le\frac52+\frac13S^2$. Put $t=S^2$. If $0\le t\le\frac32$,
then $K\le3$, so $3-K\ge\frac12-\frac t3\ge0$ and
\[
\JB_5=\frac56\Bigl\{t+\tfrac14(K-3)^2\Bigr\}
\ge\frac56\Bigl\{t+\tfrac14\Bigl(\tfrac12-\tfrac t3\Bigr)^2\Bigr\}
=\frac56\Bigl(\tfrac1{16}+\tfrac{11}{12}\,t+\tfrac1{36}\,t^2\Bigr)
\ge\frac5{96}.
\]
If $t\ge\frac32$, then $\JB_5\ge\frac56t\ge\frac54$. Equality
holds at $u=2^{-1/2}(1,0,0,0,-1)'$, where $(S,K)=(0,\frac52)$.
Hence $J_5^-=5/96$.
\end{proof}

\begin{corollary}[Exact size zero]\label{cor:size}
Let $c_\alpha=-2\log\alpha$ be the upper $\alpha$ point of
$\chi^2_2$. The test rejecting when $\JB_n>c_\alpha$ has
exact size zero if and only if $J_n^+\le c_\alpha$. Hence
the asymptotic $10\%$ and $5\%$ tests have exact size zero if
and only if $n\le 6$, and the $1\%$ test if and only if
$n\le 7$.
\end{corollary}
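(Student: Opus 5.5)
The plan has two parts: show that size zero is equivalent to $J_n^+\le c_\alpha$, then compute and compare numbers. By Theorem~\ref{thm:range}, $\JB_n$ is a continuous function of $U$ on the compact sphere $\sph$, so its range is $[J_n^-,J_n^+]$.

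\emph{If} $J_n^+\le c_\alpha$, then $\JB_n>c_\alpha$ is impossible, so the rejection event is empty and the size is zero. \emph{Conversely}, suppose $J_n^+>c_\alpha$. The set $\{u\in\sph:\JB_n(u)>c_\alpha\}$ is open in $\sph$ by continuity. It is nonempty, since it contains the single-outlier directions. By Lemma~\ref{lem:sphere}, $U$ is uniform on $\sph$, and the uniform measure charges every nonempty open set. Hence $P(\JB_n>c_\alpha)>0$. Under normality this probability does not depend on $(\mu,\sigma^2)$, so the size equals it.

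It then remains to compare $J_n^+$ from \eqref{eq:Jplus} with $c_{0.10}=2\log10\approx4.6052$, $c_{0.05}=2\log20\approx5.9915$ and $c_{0.01}=2\log100\approx9.2103$. I would tabulate the exact rationals:
\begin{itemize}
\item $J_3^+=3(1+16)/96=17/32\approx0.53$;
\item $J_4^+=4(16+36)/216=26/27\approx0.96$;
\item $J_5^+=5(81+64)/384=725/384\approx1.89$;
\item $J_6^+=6(256+100)/600=3.56$;
\item $J_7^+=7(625+144)/864=5383/864\approx6.23$;
\item $J_8^+=8(1296+196)/1176=1492/147\approx10.15$.
\end{itemize}
So at $10\%$ and $5\%$ we have $J_n^+\le c_\alpha$ exactly for $n\le6$: $J_6^+=3.56<4.61$, while $J_7^+=6.23>5.99$. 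At $1\%$ the same holds exactly for $n\le7$, since $6.23<9.21<10.15$.

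For all larger $n$, I would show $J_n^+$ is increasing, which makes the "only if" direction hold for every $n$. Write $J_n^+=\frac n{24}\{(n-2)^4/(n-1)^2+4\}$. Here $(n-2)^2/(n-1)=n-3+1/(n-1)$ is increasing for $n\ge3$, so $A_n^4$ is increasing and hence $J_n^+$ is increasing. Alternatively, $A_n^4\ge(n-3)^2$ gives $J_n^+\ge n(n-3)^2/24\to\infty$ monotonically.

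The only delicate point is the borderline comparison $J_7^+\approx6.23$ against $c_{0.05}\approx5.99$. The gap is about $0.24$, so the logarithm should be evaluated to two decimals, for instance using $\log20<3.0$, which gives $c_{0.05}<6<5383/864$.
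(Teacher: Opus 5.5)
Your proposal is correct and follows the paper's proof. The key steps match: a nonempty open superlevel set $\{\JB_n>c_\alpha\}$ has positive uniform measure, $J_n^+$ increases in $n$ because $A_n^2$ does, and the comparisons $J_6^+<c_{0.10}$ and $c_{0.05}<J_7^+<c_{0.01}<J_8^+$ fix the thresholds. You also supply two points the paper leaves implicit: the identity $A_n^2=n-3+1/(n-1)$, which justifies the monotonicity, and the remark that the null law of $\JB_n$ does not depend on $(\mu,\sigma^2)$, so the size equals this single probability.
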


\begin{proof}
If $J_n^+>c_\alpha$, then $\{u:\JB_n(u)>c_\alpha\}$ is a
non-empty open subset of $\sph$ and has positive uniform
measure. By \eqref{eq:Jplus}, $J_n^+$ increases in $n$, since
both $n$ and $A_n^2=(n-2)^2/(n-1)$ do. Finally
$J_6^+=89/25=3.56<c_{0.10}=4.605$,
$J_7^+=5383/864\approx 6.230$ lies in $(c_{0.05},c_{0.01})=
(5.991,9.210)$, and $J_8^+=1492/147\approx 10.150>c_{0.01}$.
\end{proof}

Although Theorem~\ref{thm:range} follows in a few lines from
the classical bounds, we have not found $J_n^+$ or
Corollary~\ref{cor:size} stated in the literature on sample
moment bounds or on the Jarque--Bera test.
At $n=7$ the size is positive but negligible: in $10^8$
replications the rejection frequencies at the $10\%$ and
$5\%$ asymptotic critical values are $0.0018$ and
$1.2\times10^{-5}$. The next result gives the exact leading
behaviour of the upper tail of $\JB_n$. Its final step is an
instance of the theory of \citet{Mulholland1970,
Mulholland1977} for densities of functions on a sphere at a
nondegenerate stationary point; what is specific to $\JB_n$
is the identification of the maximisers and of their
Hessian.

\begin{proposition}[Upper tail]\label{prop:tail}
Let $n\ge3$ and
\[
\lambda_n=\frac{n^2(n^3-6n^2+15n-12)}{3(n-1)^2}.
\]
The maximum $J_n^+$ is attained exactly at the $2n$
single-outlier directions, and at each of them the Hessian
of $\JB_n$ on $\sph$ is $-\lambda_n$ times the identity.
Consequently, with $d=n-2$ the dimension of $\sph$, as
$\varepsilon\downarrow0$,
\begin{equation}\label{eq:tail}
P(\JB_n>J_n^+-\varepsilon)=c_n\Bigl(\frac{2\varepsilon}
{\lambda_n}\Bigr)^{d/2}\{1+o(1)\},\qquad
c_n=\frac{n\,\Gamma\{(n-1)/2\}}{\pi^{1/2}\,\Gamma(n/2)},
\end{equation}
and, as $x\uparrow J_n^+$,
\begin{equation}\label{eq:tailden}
f_{\JB_n}(x)=\frac{n\,\Gamma\{(n-1)/2\}}{\pi^{1/2}\,
\Gamma\{(n-2)/2\}}\Bigl(\frac{2}{\lambda_n}\Bigr)^{d/2}
(J_n^+-x)^{d/2-1}\{1+o(1)\}.
\end{equation}
\end{proposition}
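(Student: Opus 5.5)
The plan has three parts: identify the maximisers, compute the Hessian at one of them, then pass to the tail probability and the density by a local (Morse-type) volume computation.

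\emph{Maximisers.} In the proof of Theorem~\ref{thm:range} the bound $\JB_n\le J_n^+$ came from two inequalities, $S^2\le K-1$ and $K\le B_n$, and both must be equalities at a maximiser. By the attainment statement of Lemma~\ref{lem:bounds} (Dalen), $K=B_n$ holds only at the single-outlier configurations. These are $u=\pm\{n(n-1)\}^{-1/2}\{(n-1)e_j-(\mathbf 1-e_j)\}$ for $j=1,\dots,n$, which gives exactly $2n$ maximisers.

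\emph{Hessian.} Fix $u^*=\{n(n-1)\}^{-1/2}(n-1,-1,\dots,-1)'$; the sign flip $u\mapsto-u$ and coordinate permutations preserve $\JB_n$ and carry this case to the others. The tangent space to $\sph$ at $u^*$ is
\[
T=\{v:\mathbf 1'v=0,\ u^{*\prime}v=0\}=\{v:v_1=0,\ \textstyle\sum_{i\ge2}v_i=0\},
\]
which has dimension $n-2=d$. The stabiliser of $u^*$ in the symmetric group contains all permutations of coordinates $2,\dots,n$. This group acts on $T$ as the standard representation of $S_{n-1}$, which is irreducible, and it preserves the Riemannian Hessian because it preserves $\JB_n$, $\sph$ and $u^*$. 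By Schur's lemma the Hessian is therefore a scalar multiple of the identity on $T$.

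To find the scalar, write $F(u)=\frac{n^2}{6}p_3^2+\frac n{24}(np_4-3)^2$ as in \eqref{eq:JBp}. For $v\in T$, the Riemannian Hessian on the sphere $\sph\subset H_n$ is
\[
\mathrm{Hess}\,F(v,v)=v'\nabla^2F(u^*)\,v-\langle\nabla F(u^*),u^*\rangle\,\|v\|^2 .
\]
Here $\nabla p_3=3u^{\circ2}$ and $\nabla p_4=4u^{\circ3}$, so $\nabla^2F$ is built from $u^{\circ2}(u^{\circ2})'$, $\mathrm{diag}(u)$, $u^{\circ3}(u^{\circ3})'$ and $\mathrm{diag}(u^{\circ2})$. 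The normal component of $\nabla F$ along $\mathbf 1$ contributes nothing, since $\mathbf 1$ is a constant normal field of the flat ambient space $H_n$.

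I would evaluate this at the single test vector $v=(0,1,-1,0,\dots,0)'/\sqrt2$. On $T$ the coordinates $u^*_i$, $i\ge2$, are all equal, so $v'u^{\circ2}=v'u^{\circ3}=0$ and only the diagonal terms survive. The computation then reduces to scalar algebra in $a=(n-1)/\sqrt{n(n-1)}$ and $b=-1/\sqrt{n(n-1)}$, together with the values $p_3(u^*)=\sqrt{(n-2)^2/\{n^2(n-1)\}}$ and $np_4(u^*)=B_n$. The target is to show that the result equals $-\lambda_n$. This explicit algebra is the step I expect to be the main obstacle: it requires careful bookkeeping of the projection term $\langle\nabla F,u^*\rangle=3\cdot\frac{n^2}{3}p_3^2+\frac{n^2}{6}(np_4-3)p_4$. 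I would cross-check the outcome at $n=3$ and $n=4$ numerically.

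\emph{Tail and density.} Since $\lambda_n>0$, each maximiser is nondegenerate. By the Morse lemma there are local coordinates $y\in\R^d$ with $\JB_n=J_n^+-\frac{\lambda_n}{2}\|y\|^2$ and a volume density that is smooth and equal to $1+O(\|y\|)$ at $0$, because the Hessian is scalar. The superlevel set $\{\JB_n>J_n^+-\varepsilon\}$ is then, up to a factor $1+o(1)$, a union of $2n$ disjoint balls of radius $r=(2\varepsilon/\lambda_n)^{1/2}$. Dividing by the area $2\pi^{(d+1)/2}/\Gamma\{(d+1)/2\}$ of $\sph$ gives
\[
2n\,\frac{\pi^{d/2}r^d/\Gamma(d/2+1)}{2\pi^{(d+1)/2}/\Gamma\{(d+1)/2\}}=c_n\Bigl(\frac{2\varepsilon}{\lambda_n}\Bigr)^{d/2},
\]
which is \eqref{eq:tail}. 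For \eqref{eq:tailden}, I would write the probability in Morse polar coordinates as $\int_0^{r}\rho^{d-1}g(\rho)\,d\rho$ with $g$ continuous and $g(0)>0$, and differentiate in $\varepsilon$. This gives $\frac d2 c_n(2/\lambda_n)^{d/2}\varepsilon^{d/2-1}\{1+o(1)\}$, and $\frac d2\Gamma(n/2)^{-1}=\Gamma\{(n-2)/2\}^{-1}$ recovers the stated constant. This argument also covers $d=1$, where the density exponent is $-\tfrac12$.
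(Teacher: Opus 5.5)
Your route is the paper's. You get the maximisers from equality in both bounds; the paper re-derives Dalen's uniqueness from the two-point kurtosis, and citing it is fine. You then use Schur's lemma on the standard representation, one test vector $v=2^{-1/2}(0,1,-1,0,\dots,0)'$, and a Morse-lemma volume count. Your Riemannian-Hessian formula is equivalent to the paper's second derivative along the great circle $u^*\cos t+v\sin t$. Your bookkeeping of $c_n$ and of the density prefactor, via $\tfrac d2/\Gamma(n/2)=1/\Gamma\{(n-2)/2\}$, is correct.

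\textbf{Gap: the Hessian scalar is never computed.} The value $-\lambda_n$ is the one quantitative claim that every constant in \eqref{eq:tail} and \eqref{eq:tailden} rests on, and you leave it as ``the target''. Two of the inputs you write down for it are wrong, so the plan as stated would not deliver $\lambda_n$.
\begin{itemize}
\item $p_3(u^*)=(n-2)\{n(n-1)\}^{-1/2}$, not $(n-2)/\{n(n-1)^{1/2}\}$, because $\sqrt n\,p_3(u^*)=A_n$.
\item $\nabla F=\tfrac{n^2}{3}p_3\nabla p_3+\tfrac{n^2}{12}(np_4-3)\nabla p_4$ and $u'\nabla p_r=r\,p_r$. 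Hence $\langle\nabla F,u^*\rangle=n^2P_3^2+\tfrac{n^2}{3}(nP_4-3)P_4$ with $P_r=p_r(u^*)$, so your coefficient $\tfrac{n^2}{6}$ is off by a factor of two.
\end{itemize}
With your two values, and the diagonal terms computed correctly, $n=3$ gives about $-3.36$ instead of $-\lambda_3=-9/2$.

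\textbf{Corrected computation.} Put $b=-\{n(n-1)\}^{-1/2}$. Then $v'\nabla^2F\,v=2n^2P_3b+n^2(nP_4-3)b^2$, and the Hessian along $v$ is
\[
\tfrac{n^2}{3}P_3(6b-3P_3)+\tfrac{n^2}{12}(nP_4-3)(12b^2-4P_4),
\]
which is exactly the paper's great-circle expression. Substitute
\[
6b-3P_3=-\frac{3n}{\{n(n-1)\}^{1/2}},\qquad 12b^2-4P_4=-\frac{4(n-3)}{n-1},\qquad nP_4-3=\frac{n^2-6n+6}{n-1}.
\]
This gives
\[
-\frac{n^2\{3(n-1)(n-2)+(n-3)(n^2-6n+6)\}}{3(n-1)^2}=-\frac{n^2(n^3-6n^2+15n-12)}{3(n-1)^2}=-\lambda_n .
\]

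\textbf{Positivity of $\lambda_n$.} Your step ``since $\lambda_n>0$'' also needs a line for all $n\ge3$. It follows from $n^3-6n^2+15n-12=(n-2)^3+3(n-2)+2>0$.
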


\begin{proof}
\emph{Maximisers.} Equality in the proof of
Theorem~\ref{thm:range} requires $S^2=K-1$ and $K=B_n$. By
the proof of Lemma~\ref{lem:bounds}, $S^2=K-1$ if and only if
$\E(Z^2-SZ-1)^2=0$, that is, every $\sqrt n\,u_i$ is a root of
$z^2-Sz-1$, so the coordinates take at most two values
\citep[cf.][]{Wilkins1944}. If a
fraction $q$ of them take one value, $Z$ is a standardised
two-point variable and $K=(1-3q+3q^2)/\{q(1-q)\}$, which
decreases in $q(1-q)$; hence $K=B_n$ if and only if
$q\in\{1/n,1-1/n\}$, in agreement with \citet{Dalen1987}.
The maximisers are therefore the $n$ coordinate permutations
of $\pm\{n(n-1)\}^{-1/2}(n-1,-1,\dots,-1)'$.

\emph{Hessian.} Since $\JB_n(-u)=\JB_n(u)$ and $\JB_n$ is
invariant under coordinate permutations, it suffices to take
$u^*=\{n(n-1)\}^{-1/2}(n-1,-1,\dots,-1)'$. A tangent vector
$v$ at $u^*$ satisfies $\mathbf 1'v=0$ and $u^{*\prime}v=0$,
which force $v_1=0$ and $\sum_{i\ge2}v_i=0$. The stabiliser
of $u^*$, the symmetric group on coordinates $2,\dots,n$,
acts on this $(n-2)$-dimensional tangent space by permuting
coordinates. This is the standard representation, which is
absolutely irreducible: a symmetric matrix on
$\{w\in\R^{n-1}:\sum w_i=0\}$ that commutes with all
coordinate permutations is a multiple of the identity. By
Schur's lemma the Hessian is therefore a multiple of the
identity.
To find the multiple, take the great circle
$u(t)=u^*\cos t+v\sin t$ with $v=2^{-1/2}(0,1,-1,0,\dots,0)'$.
Writing $y=-\{n(n-1)\}^{-1/2}$ and $P_r=p_r(u^*)$,
\[
p_3(t)=P_3\cos^3t+3y\cos t\sin^2t,\qquad
p_4(t)=P_4\cos^4t+6y^2\cos^2t\sin^2t+\tfrac12\sin^4t,
\]
so $p_3''(0)=6y-3P_3$ and $p_4''(0)=12y^2-4P_4$. As $u^*$ is
a critical point, \eqref{eq:JBp} gives
\[
\frac{d^2}{dt^2}\JB_n\{u(t)\}\Big|_{t=0}
=\frac{n^2}{3}P_3\,p_3''(0)+\frac{n^2}{12}(nP_4-3)\,p_4''(0).
\]
Substituting $P_3=(n-2)\{n(n-1)\}^{-1/2}$ and $nP_4=B_n$
and simplifying gives $-\lambda_n$. Finally
$g(n)=n^3-6n^2+15n-12$ has $g(2)=2$ and
$g'(n)=3\{(n-2)^2+1\}>0$, so $\lambda_n>0$ for $n\ge3$.

\emph{Tail.} As in \citet[Sections~2--3]{Mulholland1970}, by the
Morse lemma, near each maximiser there
are local coordinates $\eta\in\R^{n-2}$ with $\eta=0$ at the
maximiser, identity differential there, and
$\JB_n=J_n^+-\frac12\lambda_n|\eta|^2$. In these
coordinates the uniform probability on $\sph$ has a
continuous density $\rho$ with $\rho(0)=1/|\mathbb
S^{n-2}|$, where $|\mathbb S^{n-2}|=2\pi^{(n-1)/2}/
\Gamma\{(n-1)/2\}$. Each maximiser therefore contributes
$\int_{|\eta|^2<2\varepsilon/\lambda_n}\rho(\eta)\,d\eta$,
which is $\rho(0)\,\omega_{n-2}(2\varepsilon/\lambda_n)
^{(n-2)/2}\{1+o(1)\}$, with $\omega_d=\pi^{d/2}/
\Gamma(d/2+1)$ the volume of the unit ball in $\R^d$.
Summing over the $2n$ maximisers gives \eqref{eq:tail}.
Differentiating the same integral in $\varepsilon$ in polar
coordinates, and using the continuity of $\rho$, gives
\eqref{eq:tailden}.
\end{proof}

For $n=3$, \eqref{eq:tailden} gives $(2/\pi)(J_3^+-x)^{-1/2}$,
the arcsine endpoint behaviour of Proposition~\ref{prop:n3}.
For $n=4$ the exponent is zero: the density of $\JB_4$ jumps
from $27/64$ to $0$ at $J_4^+=26/27$. For $n=7$, the leading
term of \eqref{eq:tail}, evaluated at the asymptotic $5\%$
critical value, gives about $1.1\times10^{-5}$, close to the
simulated rejection frequency. The tail
constants agree with simulation for $n=4,\dots,7$.

\section{A discriminant representation}\label{sec:disc}

Let $u_1,\dots,u_n$ be the roots of the monic polynomial
$P(z)=\prod_i(z-u_i)=\sum_{j=0}^n(-1)^je_jz^{n-j}$, where the
$e_j$ are the elementary symmetric polynomials ($e_0=1$).
Newton's identities
$p_r=\sum_{j=1}^{r-1}(-1)^{j-1}e_jp_{r-j}+(-1)^{r-1}re_r$
express $e_1,\dots,e_n$ polynomially in $p_1,\dots,p_n$, so
$P$ is determined by $p=(p_1,\dots,p_n)$. Let
$\mathcal P_n\subset\R^n$ be the set of $p$ for which $P$ has
$n$ real roots, and write
$\Disc(P)=\prod_{i<j}(u_i-u_j)^2$.

\begin{lemma}\label{lem:Zn}
The uniform probability measure on $\sph$ is
$Z_n^{-1}\delta(p_1(u))\,\delta(p_2(u)-1)\,du$, where
\begin{equation}\label{eq:Zn}
Z_n=\frac{\pi^{(n-1)/2}}{\sqrt n\,\Gamma\{(n-1)/2\}} .
\end{equation}
\end{lemma}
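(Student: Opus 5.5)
We must show that $\delta(p_1(u))\,\delta(p_2(u)-1)\,du$ on $\R^n$ is a finite measure supported on $\sph$ and invariant under the orthogonal maps preserving $H_n$, with total mass $Z_n$. Normalising it then gives the uniform probability on $\sph$. The plan is to change coordinates so that the two delta functions act on separate, explicit variables.

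Choose an orthogonal change of variables $u=Qw$ with first basis vector $\mathbf 1/\sqrt n$, so that $w_1=\mathbf 1'u/\sqrt n=p_1/\sqrt n$ and $w'=(w_2,\dots,w_n)\in\R^{n-1}$ are orthonormal coordinates on $H_n$. Lebesgue measure is preserved, $du=dw_1\,dw'$. Since $\delta(p_1)=\delta(\sqrt n\,w_1)=n^{-1/2}\delta(w_1)$, integrating out $w_1$ gives a factor $n^{-1/2}$ and sets $w_1=0$. Then $p_2(u)=|w'|^2$ on $H_n$.

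It remains to evaluate $\delta(|w'|^2-1)\,dw'$ on $\R^{m}$ with $m=n-1$. In polar coordinates $w'=r\theta$ we have $dw'=r^{m-1}dr\,d\sigma(\theta)$, and $\delta(r^2-1)=\delta(r-1)/(2r)$ for $r>0$. This leaves $\tfrac12\,d\sigma(\theta)$ on the unit sphere of $H_n$, which is exactly $\sph$.

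This measure is rotation invariant, so it is a constant multiple of the uniform probability. Its total mass is
\[
\tfrac12\cdot n^{-1/2}\cdot|\mathbb S^{n-2}|
=\tfrac12 n^{-1/2}\cdot\frac{2\pi^{(n-1)/2}}{\Gamma\{(n-1)/2\}},
\]
which equals \eqref{eq:Zn}.

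The only delicate step is interpreting the product of deltas rigorously. I would read it as the coarea or disintegration formula for the map $u\mapsto(p_1,p_2)$, equivalently as the weak limit of $\phi_\epsilon(p_1)\phi_\epsilon(p_2-1)\,du$ with mollifiers $\phi_\epsilon$. In the coordinates above this limit is just an iterated one-dimensional computation. The main thing to keep track of is the Jacobian factors $\sqrt n$ and $2r$, which produce the $1/\sqrt n$ and $1/2$ in $Z_n$.
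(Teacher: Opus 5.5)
Your proposal is correct and is essentially the paper's argument. The paper applies the coarea formula directly: on $\sph$ the gradients $\nabla p_1=\mathbf 1$ and $\nabla p_2=2u$ are orthogonal with norms $\sqrt n$ and $2$, so $\delta(p_1)\delta(p_2-1)\,du$ is surface measure divided by $2\sqrt n$. Your orthogonal change of variables followed by polar coordinates computes the same two Jacobian factors explicitly, and your total mass $\tfrac12 n^{-1/2}|\mathbb S^{n-2}|$ agrees with \eqref{eq:Zn}.
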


\begin{proof}
On $\sph$ the gradients $\nabla p_1=\mathbf 1$ and
$\nabla p_2=2u$ are orthogonal with norms $\sqrt n$ and $2$.
By the coarea formula \citep[3.2.12]{Federer1969}
$\delta(p_1)\delta(p_2-1)\,du$ is surface measure on $\sph$
divided by $2\sqrt n$, and the area of the unit
$(n-2)$-sphere is $2\pi^{(n-1)/2}/\Gamma\{(n-1)/2\}$.
\end{proof}

\begin{theorem}\label{thm:disc}
Let $n\ge 4$. For almost every $(a,b)$ the density of
$(p_3(U),p_4(U))$ is
\begin{equation}\label{eq:fn}
f_n(a,b)=\frac1{Z_n}\int_{\R^{n-4}}
\frac{\mathbf 1\{(0,1,a,b,p_5,\dots,p_n)\in\mathcal P_n\}}
{\Disc(P)^{1/2}}\,dp_5\cdots dp_n ,
\end{equation}
where for $n=4$ the integral is read as evaluation of the
integrand. The density of $(S_n,K_n)$ is
$h_n(s,k)=n^{-3/2}f_n(s/\sqrt n,k/n)$.
\end{theorem}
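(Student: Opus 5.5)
The plan is to start from Lemma~\ref{lem:Zn}, which writes the uniform law of $U$ as $Z_n^{-1}\delta(p_1)\delta(p_2-1)\,du$, and to change variables from $u\in\R^n$ to power sums $p=(p_1,\dots,p_n)$.

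\emph{Step 1: the Jacobian.} The map $\Phi:u\mapsto p(u)$ has Jacobian matrix with entries $\partial p_r/\partial u_i=ru_i^{r-1}$. Its determinant is therefore $n!\,\prod_{i<j}(u_j-u_i)$, a Vandermonde determinant. In absolute value this equals $n!\,\Disc(P)^{1/2}$.

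\emph{Step 2: the covering.} On the open set where the $u_i$ are distinct, $\Phi$ is a local diffeomorphism. It is $n!$-to-one onto the interior of $\mathcal P_n$, because the power sums determine $P$ through Newton's identities, and hence determine the multiset of roots; the $n!$ orderings give the preimages. The collision set $\{u_i=u_j\}$ is Lebesgue-null and can be discarded. For any test function $g$ of $p$, the area formula gives
\[
\int_{\R^n} g(p(u))\,du=\int_{\mathcal P_n}\frac{n!\,g(p)}{n!\,\Disc(P)^{1/2}}\,dp
=\int_{\mathcal P_n}\frac{g(p)}{\Disc(P)^{1/2}}\,dp .
\]
The factor $n!$ cancels exactly, which is why no combinatorial constant appears in \eqref{eq:fn}.

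\emph{Step 3: reading off the density.} Take $g(p)=\delta(p_1)\delta(p_2-1)\phi(p_3,p_4)$ for a test function $\phi$. This gives
\[
\E\,\phi(p_3(U),p_4(U))=Z_n^{-1}\int\phi(a,b)\int_{\R^{n-4}}\frac{\mathbf 1\{(0,1,a,b,p_5,\dots)\in\mathcal P_n\}}{\Disc(P)^{1/2}}\,dp_5\cdots dp_n\,da\,db .
\]
Identifying the density of $(p_3,p_4)$ as the inner integral gives \eqref{eq:fn}, for almost every $(a,b)$ by Fubini. For $n=4$ there are no remaining variables, so the integral is just evaluation of the integrand. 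The formula for $h_n$ then follows from the linear change of variables $s=\sqrt n\,a$, $k=nb$ (Proposition~\ref{prop:power}), whose Jacobian is $n^{3/2}$.

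\emph{Main obstacle.} The hardest step is making the delta-function manipulation rigorous. I would handle it in two parts. First, replace $\delta(p_1)\delta(p_2-1)$ by the honest disintegration, namely the coarea statement already used in Lemma~\ref{lem:Zn}. Second, apply Fubini to the nonnegative integrand $\Disc(P)^{-1/2}$ on $\mathcal P_n$. Finiteness of the inner integral for a.e.\ $(a,b)$ is automatic, because the total integral equals the total mass, which is $1$. The same Fubini argument justifies restricting to the slice $p_1=0$, $p_2=1$ after the change of variables, since $\mathcal P_n$ is a product-measurable set.
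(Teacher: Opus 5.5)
Your proposal matches the paper's proof step for step. Both use the Vandermonde Jacobian $n!\,|\Delta(u)|$ and the $n!$-fold covering of the interior of $\mathcal P_n$ by the permuted Weyl chambers, so that $n!$ cancels; both then let the delta functions of Lemma~\ref{lem:Zn} act on the coordinates $p_1,p_2$, and finish with the linear rescaling whose Jacobian is $n^{3/2}$.

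One caution on your rigour paragraph: Fubini only yields statements for almost every $(p_1,p_2)$, so it cannot by itself justify restricting to the single slice $(p_1,p_2)=(0,1)$. The clean fix is to apply the area formula directly to $u\mapsto(p_3,\dots,p_n)$ on $\sph$. Its Jacobian there is $n!\,|\Delta(u)|/(2\sqrt n)$, because in the basis $\{\mathbf 1/\sqrt n,\,u\}\cup T_u\sph$ the rows $dp_1,dp_2$ vanish on the tangent space, and this reproduces the constant $Z_n$ exactly.
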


\begin{proof}
The Jacobian of $u\mapsto p$ has rows
$(ru_1^{r-1},\dots,ru_n^{r-1})$, so its absolute
determinant is $n!\,|\Delta(u)|$ with
$\Delta(u)=\prod_{i<j}(u_j-u_i)$ the Vandermonde determinant.
On the open Weyl chamber $u_1<\dots<u_n$ the map is a
diffeomorphism onto the interior of $\mathcal P_n$, and
$\R^n$ is covered, up to a null set, by the $n!$ images of
the chamber under coordinate permutations, which leave $p$
unchanged. Hence for integrable $G$,
\[
\int_{\R^n}G(p(u))\,du
=n!\int_{u_1<\dots<u_n}G(p(u))\,du
=n!\int_{\mathcal P_n}\frac{G(p)\,dp}{n!\,|\Delta(u)|}
=\int_{\mathcal P_n}\frac{G(p)\,dp}{\Disc(P)^{1/2}}.
\]
Applying this with the measure of
Lemma~\ref{lem:Zn}, whose delta functions now act on the
coordinates $p_1,p_2$, gives \eqref{eq:fn}. The scaling
$s=\sqrt n\,a$, $k=nb$ gives $h_n$.
\end{proof}

The fibre of the moment map over $(a,b)$ has dimension $n-4$.
The following result, which holds for every $n\ge 5$, shows
that the innermost integration in \eqref{eq:fn} is always over
a single interval and is always of hypergeometric type. Fix
$q=(a,b,p_5,\dots,p_{n-1})$, let $e_1,\dots,e_{n-1}$ be
determined by $(0,1,q)$, and put
\[
R_q(z)=\sum_{j=0}^{n-1}(-1)^je_jz^{n-j},\qquad
\gamma(q)=-\frac1n\sum_{j=1}^{n-1}(-1)^{j-1}e_jp_{n-j}.
\]
By Newton's identity for $p_n$,
$P(z)=R_q(z)-t$ with $t=p_n/n+\gamma(q)$: the last power sum
only shifts the graph of $R_q$ vertically. That adding a
constant to a hyperbolic polynomial (one with only real roots)
eventually destroys hyperbolicity, and that it cannot then be
restored, is the simplest case of \citet[Section~2]{Arnold1986};
the next result makes this explicit for the fibre integral.

Fibres fall into three classes: interior ones, where $R_q'$
has $n-1$ simple real zeros and $r_-<r_+$ below; exterior
ones, with fewer real critical points or $r_->r_+$; and
degenerate ones, with repeated critical points or $r_-=r_+$.
The last class is a null set, irrelevant to \eqref{eq:fn}
though not to singularities, so we may assume simple critical
points.

\begin{theorem}[Innermost fibre]\label{thm:fibre}
Let $n\ge 5$ and suppose $R_q'$ has $n-1$ simple real zeros
$w_1<\dots<w_{n-1}$, with critical values $v_j=R_q(w_j)$.
Let $L$ be the largest $v_j$ at a local minimum of $R_q$ and
$U$ the smallest $v_j$ at a local maximum, and set
$r_j=n\{v_j-\gamma(q)\}$, $r_-=n\{L-\gamma(q)\}$,
$r_+=n\{U-\gamma(q)\}$. Then:
\begin{enumerate}
\item[(i)] $(0,1,q,p_n)\in\mathcal P_n$ if and only if
$p_n\in[r_-,r_+]$; this interval is empty if $L>U$, and the
fibre is empty if $R_q'$ has fewer than $n-1$ real zeros;
\item[(ii)] $|\Disc(P)|=n\prod_{j=1}^{n-1}|p_n-r_j|$;
\item[(iii)] if $r_-<r_+$, then, writing $I$ for the set of
the $n-3$ indices $j$ with $r_j\notin\{r_-,r_+\}$ and
$y_j=(r_+-r_-)/(r_j-r_-)$ for $j\in I$,
\begin{equation}\label{eq:FD}
\int_{r_-}^{r_+}\frac{dp_n}{\Disc(P)^{1/2}}
=\frac{\pi\,F_D^{(n-3)}\bigl(\tfrac12;\tfrac12,\dots,
\tfrac12;1;y\bigr)}{\sqrt n\,\prod_{j\in I}|r_--r_j|^{1/2}} .
\end{equation}
\end{enumerate}
\end{theorem}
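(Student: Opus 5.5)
The plan is to treat $P(z)=R_q(z)-t$, with $t=p_n/n+\gamma(q)$, as a vertical shift of a fixed degree-$n$ monic polynomial, and to read off all three parts from the graph of $R_q$ and the resultant formula for the discriminant.

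\emph{Part (i).} By Rolle's theorem, if $P$ has $n$ real roots (with multiplicity) then $P'=R_q'$ has $n-1$ real zeros. So the fibre is empty when $R_q'$ has fewer than $n-1$ real zeros. When the zeros $w_1<\dots<w_{n-1}$ are simple, $R_q$ alternates between local maxima and local minima at the $w_j$. The equation $R_q(z)=t$ has $n$ real roots with multiplicity exactly when the horizontal line at height $t$ meets every monotone branch. Since $R_q$ is monic of degree $n$, there are $n$ branches, with the outer ones unbounded in the appropriate direction. Hence the condition is that $t$ lies between consecutive critical values on every branch. Equivalently, $t\ge v_j$ at each local minimum and $t\le v_j$ at each local maximum, that is, $L\le t\le U$. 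Translating through $p_n=n\{t-\gamma(q)\}$ gives $p_n\in[r_-,r_+]$, and this interval is empty when $L>U$. Closedness of $\mathcal P_n$ handles the endpoints.

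\emph{Part (ii).} For a monic polynomial $P$ of degree $n$, $\Disc(P)=(-1)^{n(n-1)/2}\operatorname{Res}(P,P')$. Because $P'=R_q'=n\prod_j(z-w_j)$, the resultant is
\[
\operatorname{Res}(P,P')=(-1)^{n(n-1)}\,n^n\prod_{j}P(w_j).
\]
Here $P(w_j)=v_j-t=(r_j-p_n)/n$. Taking absolute values gives
\[
|\Disc(P)|=n^n\prod_j|r_j-p_n|/n^{n-1}=n\prod_j|p_n-r_j|.
\]
On the interior of $[r_-,r_+]$ the sign is positive, so $\Disc(P)^{1/2}$ is the square root of this product.

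\emph{Part (iii).} Exactly one critical value equals $L$ and one equals $U$. This holds on the generic set with distinct critical values, which is all that the almost-everywhere statement in \eqref{eq:fn} needs. The remaining $n-3$ values $r_j$, $j\in I$, all lie outside $[r_-,r_+]$. The reason is that any local-minimum value is at most $L$ and any local-maximum value is at least $U$, so no $r_j$ lies strictly between $r_-$ and $r_+$.

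Next, substitute $p_n=r_-+(r_+-r_-)x$ with $x\in[0,1]$. Then
\[
|p_n-r_-|\,|p_n-r_+|=(r_+-r_-)^2x(1-x),
\]
and for $j\in I$,
\[
|p_n-r_j|=|r_--r_j|\,|1-y_jx|.
\]
This uses that $r_j-r_-$ and $p_n-r_j$ have a constant sign on the interval, with $y_j<1$; some $y_j$ are negative, namely those with $r_j<r_-$. The integral therefore becomes
\[
\frac{1}{\sqrt n\prod_{j\in I}|r_--r_j|^{1/2}}\int_0^1 x^{-1/2}(1-x)^{-1/2}\prod_{j\in I}(1-y_jx)^{-1/2}\,dx .
\]
The factors of $r_+-r_-$ cancel. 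By the Euler integral representation
\[
F_D(a;b;c;y)=\frac{\Gamma(c)}{\Gamma(a)\Gamma(c-a)}\int_0^1 x^{a-1}(1-x)^{c-a-1}\prod_j(1-y_jx)^{-b_j}\,dx,
\]
taken with $a=\tfrac12$, $c=1$ and $b_j=\tfrac12$, the prefactor is $1/\pi$. This yields \eqref{eq:FD}. The representation is valid since every $y_j<1$, so the integrand is well defined, possibly after analytic continuation where $|y_j|>1$ with $y_j$ negative.

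\emph{Main obstacle.} The only delicate step is the sign bookkeeping in (iii), namely showing that no $r_j$ with $j\in I$ lies inside $(r_-,r_+)$. The argument is the ordering fact from (i): every local-minimum value is at most $L$ and every local-maximum value is at least $U$. I would state this explicitly, since it is what makes each $y_j<1$.
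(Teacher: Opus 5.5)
Your proposal is correct and follows the paper's proof essentially step for step: Rolle's theorem and the $n$ monotone branches of $R_q$ for (i), the resultant formula $\Disc(P)=(-1)^{n(n-1)/2}n^n\prod_jP(w_j)$ with $P(w_j)=(r_j-p_n)/n$ for (ii), and the affine substitution $p_n=r_-+(r_+-r_-)x$ followed by Euler's integral with $a=\tfrac12$, $c=1$ for (iii). Two points the paper leaves implicit are spelled out in your version: you assume that $L$ and $U$ are each attained at a single critical point (if either is attained twice, $|I|<n-3$ and the integral diverges logarithmically at an endpoint), and you check that every $y_j<1$.
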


\begin{proof}
(i) If $R_q'$ has fewer than $n-1$ real zeros, $P=R_q-t$ has
fewer than $n$ real roots by Rolle's theorem. Otherwise the
critical points split $\R$ into $n$ intervals on which $R_q$
is strictly monotone, each containing at most one root of
$R_q-t$; there are $n$ real roots if and only if each interval
contains one, that is, if and only if $t$ lies below every
local maximum value and above every local minimum value of
$R_q$ (closed conditions give the multiple-root boundary).
(ii) For monic $P$ with $P'(z)=n\prod_j(z-w_j)$,
$\Disc(P)=(-1)^{n(n-1)/2}n^n\prod_jP(w_j)$ and
$P(w_j)=v_j-t=(r_j-p_n)/n$.
(iii) Substitute $p_n=r_-+(r_+-r_-)x$; then
$(p_n-r_-)(r_+-p_n)=(r_+-r_-)^2x(1-x)$ and
$|p_n-r_j|=|r_--r_j|(1-y_jx)$ with $y_j<1$, and use Euler's
integral $F_D^{(m)}(a;b;c;y)=\{\Gamma(c)/\Gamma(a)
\Gamma(c-a)\}\int_0^1x^{a-1}(1-x)^{c-a-1}\prod_j
(1-y_jx)^{-b_j}\,dx$ \citep{Exton1976} with $a=\frac12$,
$c=1$.
\end{proof}

\begin{remark}\label{rem:genus}
The right-hand side of \eqref{eq:FD} is a period of the
hyperelliptic curve $w^2=\prod_{j=1}^{n-1}(c-r_j)$, of genus
$\lfloor(n-2)/2\rfloor$. For $n=5$ the genus is one and
$F_D^{(2)}$, an Appell $F_1$, reduces to a complete elliptic
integral (Theorem~\ref{thm:n5}); for $n=6,7$ the genus is two
and no such reduction is available in general. The outer
integrals over $(p_5,\dots,p_{n-1})$ run over the
semialgebraic set on which the conditions of
Theorem~\ref{thm:fibre} hold, whose geometry is that of the
Vandermonde mapping
\citep{Arnold1986,Givental1987,Kostov1989}.
The discriminant representation ties singular behaviour of
$h_n$ to root collisions, that is, to coincident residual
coordinates. A simpler instance of the
same low-dimensional phenomenon occurs for the push-forward of
uniform spherical measure under a quadratic form, the ``real
numerical shadow'', whose density also involves complete
elliptic integrals, equivalently ${}_2F_1(\frac12,\frac12;1;
\cdot)$, in dimensions three to five \citep{Dunkl2015}.
\end{remark}

\section{Exact laws for \texorpdfstring{$n=3,4,5$}{n=3,4,5}}
\label{sec:small}

\subsection{\texorpdfstring{$n=3$}{n=3}}

\begin{proposition}\label{prop:n3}
For $n=3$, $K_3=3/2$ almost surely, $S_3=\cos(3\theta)/
\sqrt2$ with $\theta$ uniform on $[0,2\pi)$, and
$\JB_3=9/32+Y/4$ with $Y\sim\mathrm{Beta}(\frac12,\frac12)$.
Hence $\JB_3$ has support $[9/32,17/32]$, distribution
function $F(x)=(2/\pi)\arcsin\{4(x-9/32)\}^{1/2}$, density
$4\pi^{-1}\{(4x-\frac98)(\frac{17}8-4x)\}^{-1/2}$, and
$p$-quantile $9/32+\frac14\sin^2(\pi p/2)$.
\end{proposition}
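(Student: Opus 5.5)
For $n=3$ the residual sphere $\Omega_3$ is a circle in the plane $H_3$, so I would parametrise it explicitly. Take an orthonormal basis of $H_3$, for instance $e=(2,-1,-1)'/\sqrt6$ and $f=(0,1,-1)'/\sqrt2$, and write $u(\theta)=e\cos\theta+f\sin\theta$. By Lemma~\ref{lem:sphere}, $\theta$ is uniform on $[0,2\pi)$. The coordinates are then $u_k=\sqrt{2/3}\cos(\theta-2\pi k/3)$ for $k=0,1,2$, after a harmless relabelling and choice of sign for $f$.

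Next I would compute the power sums using the identities $\sum_k\cos^3(\theta_k)=\tfrac34\cos3\theta$ and $\sum_k\cos^4(\theta_k)=\tfrac98$, where $\theta_k=\theta-2\pi k/3$. These follow from $\cos^3x=\tfrac14(3\cos x+\cos3x)$ and $\cos^4x=\tfrac18(3+4\cos2x+\cos4x)$, together with the vanishing of sums of $\cos(m\theta_k)$ over $k$ when $3\nmid m$. This gives
\[
p_3=(2/3)^{3/2}\cdot\tfrac34\cos3\theta,\qquad p_4=(4/9)\cdot\tfrac98=\tfrac12 .
\]
Proposition~\ref{prop:power} then yields $K_3=3p_4=3/2$ and $S_3=\sqrt3\,p_3=\cos(3\theta)/\sqrt2$. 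Alternatively, $K_3=3/2$ follows without coordinates: for three numbers with $p_1=0$ one has $p_4=p_2^2/2$ by Newton's identities, since $e_1=0$ and $e_4=0$.

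Substituting into \eqref{eq:JB} gives
\[
\JB_3=\tfrac12\bigl\{\tfrac12\cos^2 3\theta+\tfrac14\cdot\tfrac94\bigr\}=\tfrac9{32}+\tfrac14\cos^23\theta .
\]
Since $3\theta \bmod 2\pi$ is again uniform, $\cos^2 3\theta$ has the law of $\cos^2\phi$ with $\phi$ uniform. That law is $\mathrm{Beta}(\tfrac12,\tfrac12)$, the arcsine law, via $P(\cos^2\phi\le y)=(2/\pi)\arcsin\sqrt y$ obtained by counting arcs of $\phi$. The support $[9/32,17/32]$, the distribution function, the density and the quantile then follow by the affine change $Y=4(x-9/32)$. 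The quantile is $Y=\sin^2(\pi p/2)$, and the density comes from differentiating, using $4\{Y(1-Y)\}^{1/2}=\{(4x-\tfrac98)(\tfrac{17}8-4x)\}^{1/2}$.

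The main obstacle is purely bookkeeping: getting the constants in $p_3$ right, including the orthonormal parametrisation and the factor $(2/3)^{3/2}\cdot\tfrac34\cdot\sqrt3=1/\sqrt2$. I would check it at the single-outlier point $\theta=0$, where Lemma~\ref{lem:bounds} gives $S=A_3=1/\sqrt2$.
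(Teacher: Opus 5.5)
Your proof is essentially the paper's: it uses the same unit-speed parametrisation of $\Omega_3$ (Fisher's), the same two trigonometric sums, Proposition~\ref{prop:power}, and the arcsine law of $\cos^2$ of a uniform angle. Your constants $p_3=\cos(3\theta)/\sqrt6$ and $p_4=\tfrac12$ are correct.

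There is one slip, in the density step. The identity $4\{Y(1-Y)\}^{1/2}=\{(4x-\tfrac98)(\tfrac{17}8-4x)\}^{1/2}$ is false: $Y=4x-\tfrac98$ and $1-Y=\tfrac{17}8-4x$ give $Y(1-Y)=(4x-\tfrac98)(\tfrac{17}8-4x)$ exactly. The prefactor $4/\pi$ instead comes from the Jacobian $dY/dx=4$ multiplying the $\mathrm{Beta}(\tfrac12,\tfrac12)$ density $\pi^{-1}\{Y(1-Y)\}^{-1/2}$.
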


\begin{proof}
The unit-speed parametrisation of \citet[Section~3]{Fisher1930},
$u(\theta)=(2/3)^{1/2}\{\cos\theta,\cos(\theta-2\pi/3),
\cos(\theta+2\pi/3)\}$ of $\Omega_3$ gives
$p_3=\cos(3\theta)/\sqrt6$ and $p_4=1/2$, by the identities
$\sum_j\cos^3(\theta+2\pi j/3)=\frac34\cos3\theta$ and
$\sum_j\cos^4(\theta+2\pi j/3)=\frac98$. Apply
Proposition~\ref{prop:power}; $\cos^2$ of a uniform angle is
$\mathrm{Beta}(\frac12,\frac12)$.
\end{proof}

The density of $S_3$ is therefore
$\sqrt2\,\pi^{-1}(1-2s^2)^{-1/2}$, $|s|<2^{-1/2}$, with
$K_3$ degenerate, as in \citet{Fisher1930}; this is also the
starting point of NHN's recurrence.

\subsection{\texorpdfstring{$n=4$}{n=4}}\label{sec:n4}

For $n=4$, $(p_1,p_2,p_3,p_4)=(0,1,a,b)$ gives
$P_4(z)=z^4-\frac12z^2-\frac a3z+\frac{1-2b}8$, and with
$a=s/2$, $b=k/4$,
\begin{equation}\label{eq:Q4}
144\,\Disc(P_4)=Q_4(s,k)=18-45k+36k^2-9k^3-34s^2
+18s^2k-3s^4 .
\end{equation}

\begin{theorem}\label{thm:n4}
The support of $(S_4,K_4)$ is the closure of
$\mathcal D_4=\{(s,k):Q_4(s,k)>0,\ k>1\}$, and
\begin{equation}\label{eq:h4}
h_4(s,k)=\frac{3}{2\pi\,Q_4(s,k)^{1/2}},\qquad
(s,k)\in\mathcal D_4 .
\end{equation}
\end{theorem}

\begin{proof}
For $n=4$, $Z_4=\pi$ and Theorem~\ref{thm:disc} has no fibre
variables, so $h_4=\{8\pi\,\Disc(P_4)^{1/2}\}^{-1}$ wherever
$P_4$ has four real roots; \eqref{eq:Q4} gives \eqref{eq:h4}.
It remains to identify where $P_4$ has four real roots. If
$Q_4<0$, $P_4$ has exactly two. If $Q_4>0$, its roots are
either all real or two non-real conjugate pairs. In the
second case write them $\alpha\pm i\beta$, $\gamma\pm i
\delta$ with $\beta,\delta>0$; since $e_1=0$, $\gamma=-\alpha$,
and with $\sigma=\beta^2+\delta^2>0$ the condition $p_2=1$
gives $\alpha^2=(1+2\sigma)/4$. Then
\[
p_4=4\alpha^4-12\alpha^2\sigma+2(\beta^4+\delta^4)
\le 4\alpha^4-12\alpha^2\sigma+2\sigma^2
=\tfrac14-2\sigma-3\sigma^2<\tfrac14,
\]
that is, $k=4p_4<1$. Conversely, four real roots give
$k\ge 1$ by Lemma~\ref{lem:bounds}.
\end{proof}

\begin{remark}\label{rem:triangle}
The curve $Q_4=0$ has exactly three singular points: a node
at $(0,1)$, the image of the two-pair pattern $(x,x,-x,-x)$,
where the support has a corner with tangents
$k-1=\pm\frac43s$; and cusps at $(\pm2/\sqrt3,7/3)$, the
images of the single-outlier pattern. The support is the
curvilinear triangle with these vertices
(Figure~\ref{fig:support}a); its edges are images of
configurations with exactly one coincident pair. On the
axis, $Q_4(0,k)=9(k-1)^2(2-k)$, so the support meets $s=0$
in $[1,2]$. In the coefficients $(\lambda_3,\lambda_4)=
(-a/3,(1-2b)/8)$ of $P_4$, the support is the section
$\lambda_2=-\frac12$ of the set of hyperbolic quartics
$z^4+\lambda_2z^2+\lambda_3z+\lambda_4$, which
\citet{Arnold1986} describes as a pyramid bounded by a
swallowtail; the node and the cusps correspond to the
self-intersection and the cuspidal edges of the swallowtail.
\citet{DeBartolo2026} derive this boundary analytically as
the curve $(s,k)=(3r^3-3r,\,2+2r^2-3r^4)$, $|r|\le1$, which
satisfies $Q_4=0$ identically. It passes through $(0,1)$ twice,
at $r=\pm1$, with the two tangents above, so the lower vertex
is a corner rather than a cusp; the cusps are at
$r=\pm3^{-1/2}$. Here the boundary arises as the discriminant
locus of the quartic $P_4$, and Theorem~\ref{thm:n4} also gives
the density on its interior. The support is also strictly
smaller than the region allowed by the classical bounds: for
$n=4$ the bound of \citet{SharmaBhandari2015} reads
$K\le 2+S^2/4$, and $Q_4(s,2+s^2/4)=-s^2(3s^2-4)^2/64$, so
this parabola touches the support only at $(0,2)$ and at the
two cusps.
\end{remark}

\begin{figure}[tbp]
\centering
\includegraphics[width=\textwidth]{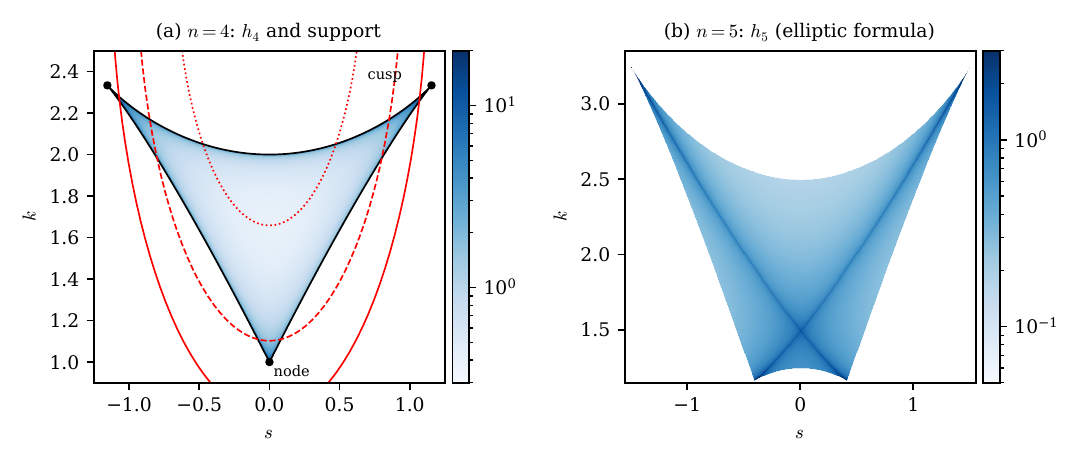}
\caption{(a) Exact joint density \eqref{eq:h4} of
$(S_4,K_4)$ (log colour scale) on its curvilinear-triangle
support, with the node and cusps marked and the ellipses
$\JB_4=0.3$, $0.6$ and $0.852$ (the $95\%$ quantile).
(b) Exact joint density of $(S_5,K_5)$ from
Theorem~\ref{thm:n5}; the dark curves are the interior
logarithmic singularities.}
\label{fig:support}
\end{figure}

Theorem~\ref{thm:n4} sharpens an observation of
\citet{ShentonBowman1977}, whose simulations showed a
pronounced two-lobed shape in the conditional distribution of
$S_n$ given large $K_n$. For fixed $k$, $Q_4$ is a quadratic
in $s^2$. If $1<k<2$, the support slice is a single interval
symmetric about zero, and the conditional density,
proportional to $Q_4(s,k)^{-1/2}$, has inverse-square-root
singularities at its two endpoints. If $2<k<7/3$, the slice
splits into two symmetric intervals excluding zero, with such
singularities at all four endpoints.

For fixed $s$, $Q_4$ is a cubic in $k$, which makes the
marginal of $S_4$ and the distribution function of $\JB_4$
elliptic. The marginal of $S_4$ recovers \citet{McKay1933}.

\begin{corollary}\label{cor:S4}
Let $0<|s|<2/\sqrt3$. Then $Q_4(s,\cdot)$ has three real
zeros $k_1<1<k_2<k_3$, the support slice is $(k_2,k_3)$, and
the density of $S_4$ is
\begin{equation}\label{eq:gS4}
g_4(s)=\frac{\EK(m)}{\pi(k_3-k_1)^{1/2}}
=\frac{1}{2\sqrt3}\,{}_2F_1\Bigl(\frac13,\frac23;1;
1-\frac{3s^2}{4}\Bigr),\qquad
m=\frac{k_3-k_2}{k_3-k_1},
\end{equation}
where $\EK(m)=\int_0^{\pi/2}(1-m\sin^2\phi)^{-1/2}d\phi
=\frac\pi2\,{}_2F_1(\frac12,\frac12;1;m)$.
\end{corollary}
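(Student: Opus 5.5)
The marginal of $S_4$ is $g_4(s)=\int h_4(s,k)\,dk$ over the support slice. By Theorem~\ref{thm:n4} this equals $\frac{3}{2\pi}\int Q_4(s,k)^{-1/2}dk$. The plan has three steps.

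\textbf{Step 1: locate the zeros of the cubic and identify the slice.} As a polynomial in $k$, $Q_4(s,k)=-9k^3+36k^2+\cdots$ has leading coefficient $-9$. It is convenient to evaluate $Q_4$ at three points.
\begin{itemize}
\item At $k=1$: $Q_4(s,1)=18-45+36-9-34s^2+18s^2-3s^4=-16s^2-3s^4<0$ for $s\neq0$.
\item As $k\to-\infty$: $Q_4\to+\infty$, so there is a zero $k_1<1$.
\item At $k=2$: $Q_4(s,2)=18-90+144-72-34s^2+36s^2-3s^4=2s^2-3s^4$, which is positive for $0<s^2<2/3$.
\end{itemize}
For larger $|s|$, use the point $k=2+s^2/4$ from Remark~\ref{rem:triangle}. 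There $Q_4=-s^2(3s^2-4)^2/64\le0$, which is not helpful. It is better to use the boundary parametrisation $(s,k)=(3r^3-3r,\,2+2r^2-3r^4)$. For each $s$ with $0<|s|<2/\sqrt3$, the equation $3r^3-3r=s$ has three real roots $r\in(-1,1)$. These give three real zeros of $Q_4(s,\cdot)$, two of which lie above $1$ (they exceed $1$ because $k-1=(1-r^2)(1+3r^2)$). The remaining zero, $k_1$, comes from the third $r$ with $|r|>1$ or from the sign analysis above.

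So $Q_4(s,\cdot)$ is negative on $(k_1,\ldots)$ at $k=1$, positive between $k_2$ and $k_3$, and negative beyond $k_3$. The slice of $\mathcal D_4$ (which requires $k>1$) is therefore exactly $(k_2,k_3)$.

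\textbf{Step 2: reduce to a complete elliptic integral.} Write $Q_4(s,k)=9(k-k_1)(k-k_2)(k_3-k)$. The standard Legendre reduction, via the substitution $k=k_3-(k_3-k_2)\sin^2\phi$, gives
\[
\int_{k_2}^{k_3}\frac{dk}{\sqrt{(k-k_1)(k-k_2)(k_3-k)}}=\frac{2\EK(m)}{(k_3-k_1)^{1/2}},\qquad m=\frac{k_3-k_2}{k_3-k_1}.
\]
Multiplying by $\frac{3}{2\pi}\cdot\frac13$ yields the first expression in \eqref{eq:gS4}.

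\textbf{Step 3: obtain the ${}_2F_1(\frac13,\frac23;1;\cdot)$ form. This is the main obstacle.} I would use the parametrisation by $r$. The zeros $k_j$ are the values $2+2r_j^2-3r_j^4$ at the three roots $r_j$ of the depressed cubic $r^3-r-s/3=0$. Trigonometric solution gives $r_j=\frac{2}{\sqrt3}\cos\{(\theta+2\pi j)/3\}$ with $\cos\theta=\frac{3\sqrt3}{2}\cdot\frac{s}{3}=\frac{\sqrt3 s}{2}$, so that $1-\frac{3s^2}{4}=\sin^2\theta$.

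The modulus $m$ then becomes a function of $\theta$ alone. Two routes to the ${}_2F_1$ identity are available:
\begin{itemize}
\item Invoke the classical cubic (Ramanujan signature-3) transformation relating $\EK$ to ${}_2F_1(\frac13,\frac23;1;\cdot)$, with argument $\sin^2\theta$.
\item More safely, check that both sides of \eqref{eq:gS4} satisfy the same second-order Picard--Fuchs ODE in $x=1-3s^2/4$. This is the hypergeometric equation with parameters $(\frac13,\frac23;1)$, since the period of $w^2=Q_4$ is annihilated by the Gauss--Manin operator. One then matches the two solutions at a regular singular point. At $x\to0$ (the cusps) the density stays finite, which selects the analytic solution, and the normalisation is fixed by the value at $x=0$. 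There $k_2=k_3$ and $m=0$, so $g_4=\frac{\pi/2}{\pi\sqrt{k_3-k_1}}$, and at the cusp $k_3-k_1=3$ gives $\frac{1}{2\sqrt3}$, which agrees.
\end{itemize}

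Consistency with \citet{McKay1933} serves as a final check.
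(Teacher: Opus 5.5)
\textbf{Steps 1 and 2.} These are sound, and for the first expression they follow the paper's route. The paper integrates $h_4=3/(2\pi Q_4^{1/2})$ over $(k_2,k_3)$ using $Q_4=9(k-k_1)(k-k_2)(k_3-k)$ and the Legendre reduction, citing Byrd--Friedman 236.00.

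To locate the slice, the paper uses the discriminant $729s^2(4-3s^2)^3>0$ of $Q_4(s,\cdot)$ and rules out $1>k_3$ by connectivity of $\Omega_4$. Your use of the identity $Q_4(3r^3-3r,\,2+2r^2-3r^4)\equiv0$, valid for all real $r$, is a more explicit alternative, but two points need fixing:
\begin{itemize}
\item Only two roots of $3r^3-3r=s$ lie in $(-1,1)$; for $s>0$ both lie in $(-1,0)$. The third root lies in $(1,2/\sqrt3)$, and it is this root that gives $k_1<1$.
\item You must show that the two roots in $(-1,1)$ give distinct values of $k$, since otherwise the slice degenerates. Either quote the discriminant, or note the following. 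If $k(r_a)=k(r_b)$ with $r_a\neq r_b$, then either $r_a=-r_b$, which forces $s=0$, or $r_a^2+r_b^2=\tfrac23$. Combined with the Vieta relation $r_a^2+r_ar_b+r_b^2=1$, the latter forces $r_a=r_b$.
\end{itemize}

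\textbf{Step 3: the genuine gap.} Neither of your two routes is actually carried out.
\begin{itemize}
\item For the cubic-transformation route, you do not compute $m$ as a function of $\theta$, nor identify the transformation that carries it to $\sin^2\theta$.
\item For the ODE route, the decisive claim is asserted rather than derived: that the Picard--Fuchs equation of $\int dk/Q_4^{1/2}$ in $x=1-3s^2/4$ is exactly the hypergeometric equation with parameters $(\tfrac13,\tfrac23;1)$. The existence of some second-order Gauss--Manin equation does not identify which one it is, and that identification is the entire content of the second equality.
\end{itemize}
Your matching step is correct once the equation is known. Boundedness at $x=0$ excludes the logarithmic solution, and $k_3-k_1=3$ at the cusp fixes the constant $1/(2\sqrt3)$.

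\textbf{How the paper handles it, and how you can close it.} The paper does not derive the transformation. It takes the second form from \citet[eq.~(36)]{McKay1933}, who obtained the density of $S_4$ independently. The two expressions therefore coincide because both equal $g_4$, and the paper also checks this numerically to about $10^{-13}$. You can close the gap the same way, using McKay as a proof rather than only as a final check.

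If you want your ODE route to stand on its own, you must determine the operator explicitly. One way is to show all of the following, after which Riemann's theorem yields the hypergeometric equation:
\begin{itemize}
\item its only singular points are $x=0,1,\infty$, since the discriminant of $Q_4(s,\cdot)$ is proportional to $x^3(1-x)$;
\item it has no apparent singularities;
\item its local exponents are $0,0$ at $x=0$ and at $x=1$, and $\tfrac13,\tfrac23$ at infinity, the latter coming from the scaling $k\sim s^{4/3}$.
\end{itemize}
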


\begin{proof}
The discriminant of $Q_4(s,\cdot)$ is
$729\,s^2(4-3s^2)^3>0$, so the zeros are real and distinct.
Since $Q_4(s,1)=-s^2(16+3s^2)<0$ and $Q_4\to+\infty$ as
$k\to-\infty$, $1\in(k_1,k_2)$ or $1>k_3$. In the second case
the slice would be empty near $s$, which is impossible
because $S_4$ is continuous on the connected set
$\Omega_4$ with range $[-2/\sqrt3,2/\sqrt3]$. Integrate
\eqref{eq:h4} over $(k_2,k_3)$ using
$Q_4=9(k-k_1)(k-k_2)(k_3-k)$ and \citet[236.00]{ByrdFriedman1971}
to obtain the first expression. The second is the closed form
of \citet[eq.~(36)]{McKay1933}, who obtained the density of
$S_4$ as a complete elliptic integral and reduced it, via
Mehler's integral, to a Legendre function. Both expressions
are the density of $S_4$ and so coincide; numerically they
agree to about $10^{-13}$, and their equality amounts to a
transformation between the two hypergeometric functions.
\end{proof}

McKay's form shows that the density of $S_4$ equals
$1/(2\sqrt3)$ at the ends $\pm2/\sqrt3$ of its range and has
a logarithmic singularity at $0$, the image of the node.

\begin{proposition}\label{prop:JB4}
$\JB_4$ has support $[1/6,26/27]$. The critical values of
$\JB_4$ restricted to the boundary of the support are
$1/6$ (at $(0,2)$), $x_*=(145\sqrt{145}-1729)/36\approx
0.47309$ (tangency with the lower edges, where
$k=\frac32(\sqrt{145}-11)$), $2/3$ (at the node) and $26/27$
(at the cusps). For $x\ge 0$,
\begin{equation}\label{eq:FJB4}
F_{\JB_4}(x)=\frac1\pi\int
\frac{\EK(m)-\EF(\phi_x\mid m)}{(k_3-k_1)^{1/2}}\,ds,
\qquad
\sin^2\phi_x=\frac{(k_3-k_1)(y-k_2)}{(k_3-k_2)(y-k_1)},
\end{equation}
where $k_j$ and $m$ are as in Corollary~\ref{cor:S4},
$y=\max\{k_2,3-(6x-4s^2)^{1/2}\}$, $\EF(\cdot\mid m)$ is the
incomplete elliptic integral of the first kind, and the
integral runs over those $s$ with $4s^2<6x$ and $y<k_3$.
\end{proposition}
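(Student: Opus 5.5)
We reduce everything to the joint law of $(S_4,K_4)$ from Theorem~\ref{thm:n4}. By \eqref{eq:JB} with $n=4$, $\JB_4=\frac23\{s^2+\frac14(k-3)^2\}$, so its level sets are ellipses centred at $(0,3)$. The support of $\JB_4$ is the image of the closed curvilinear triangle $\overline{\mathcal D_4}$. This set is connected, so the support is the interval between the minimum and maximum of this quadratic over the triangle.

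\emph{Extrema and boundary critical values.} The centre $(0,3)$ lies outside the support, since $k\le B_4=7/3$. Hence the extrema occur on the boundary. We parametrise the boundary by the curve of \citet{DeBartolo2026} recorded in Remark~\ref{rem:triangle}, $(s,k)=(3r^3-3r,\,2+2r^2-3r^4)$ with $|r|\le1$. We then study
\[
g(r)=\tfrac23\bigl\{(3r^3-3r)^2+\tfrac14(3r^4-2r^2-1)^2\bigr\},
\]
which is even in $r$, so we set $\rho=r^2\in[0,1]$. Evaluating $g$ gives $1/6$ at $\rho=0$, the point $(0,2)$; $2/3$ at $\rho=1$, the node; and $26/27$ at $\rho=1/3$, the cusps, in agreement with $J_4^+$ from Theorem~\ref{thm:range}. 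The remaining zeros of $dg/d\rho$ come from a polynomial in $\rho$ after removing known factors. The relevant root lies on the lower edges, the part with $\rho\in(1/3,1)$, and should produce $k=\frac32(\sqrt{145}-11)$ and $x_*$. We confirm this through the tangency condition between the ellipse and the edge, $\nabla \JB_4\parallel\nabla Q_4$, eliminating $s^2$ through $Q_4=0$. This leaves a quadratic in $k$ whose discriminant involves $145$. Comparing all the values identifies the minimum $1/6$ and maximum $26/27$, and gives the support $[1/6,26/27]$. The minimum also confirms $J_4^-=1/6$ as used in Theorem~\ref{thm:range}. We must also rule out spurious roots outside $[0,1]$ and check that at the cusps only the endpoint value counts.

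\emph{Distribution function.} We write $F_{\JB_4}(x)=\iint_{\JB_4\le x}h_4\,dk\,ds$. For fixed $s$, the condition $\JB_4\le x$ reads $(k-3)^2\le 6x-4s^2$. Since $k\le 7/3<3$, this is equivalent to $k\ge 3-(6x-4s^2)^{1/2}$, and it requires $4s^2\le6x$. Intersecting with the slice $(k_2,k_3)$ of Corollary~\ref{cor:S4} gives the range $k\in(y,k_3)$ with $y$ as stated, and the range is nonempty only when $y<k_3$. The integrand is $\frac{3}{2\pi}\{9(k-k_1)(k-k_2)(k_3-k)\}^{-1/2}$. The substitution behind \citet[236.00]{ByrdFriedman1971} is $\sin^2\phi=(k_3-k_1)(k-k_2)/\{(k_3-k_2)(k-k_1)\}$, and it turns $\int_{k_2}^{k}$ into $2(k_3-k_1)^{-1/2}\EF(\phi\mid m)$. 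Thus $\int_y^{k_3}$ equals $2(k_3-k_1)^{-1/2}\{\EK(m)-\EF(\phi_x\mid m)\}$. The prefactor $\frac{3}{2\pi}\cdot\frac13\cdot2=\frac1\pi$ gives \eqref{eq:FJB4}.

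The main obstacle is the boundary critical-point computation, namely the factorisation of $dg/d\rho$ and the verification of the surd expression for $x_*$. I would first carry this out symbolically in $\rho$ and then cross-check it with the Lagrange tangency equations in $(s,k)$.
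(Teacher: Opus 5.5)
Your proposal is correct. Its second half is the paper's argument: the slice $(y,k_3)$ from $k\le 7/3<3$, the Byrd--Friedman substitution, and the constant $\frac{3}{2\pi}\cdot\frac13\cdot2=\frac1\pi$.

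The genuine difference is how you locate the boundary critical values. The paper stays with the implicit curve. It eliminates $s$ between $Q_4=0$ and the Lagrange condition, obtains $(k-2)(k-1)^2(3k-7)^3(k^2+33k-54)=0$, and keeps the admissible roots; the node and cusps appear there because $\nabla Q_4$ vanishes at them. You instead pull $\JB_4$ back along the rational parametrisation of Remark~\ref{rem:triangle}. This reduces the problem to one variable and factors cleanly. With $s^2=9\rho(\rho-1)^2$ and $k-3=-(3\rho^2-2\rho+1)$,
\[
\frac{dg}{d\rho}=\tfrac23\,(3\rho-1)(3\rho^2+7\rho-8).
\]
\begin{itemize}
\item The factor $3\rho-1$ gives the cusps, where the parametrisation is singular.
\item The endpoints $\rho=0$ and $\rho=1$ give $(0,2)$ and the node.
\item The only other root in $[0,1]$ is $\rho=(\sqrt{145}-7)/6\in(\frac13,1)$, on the lower edges. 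Using $3\rho^2=8-7\rho$ there gives $k=9\rho-6=\frac32(\sqrt{145}-11)$ and $g=6(1-\rho)^2(\rho+\frac94)=(145\sqrt{145}-1729)/36$.
\end{itemize}
So the step you flagged as the main obstacle goes through.

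Your route only visits boundary points and needs no sorting of eliminant roots. The implicit route works directly with $Q_4$, which the paper derives itself, at the cost of a higher-degree eliminant.

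Your route does presuppose that the image of $|r|\le1$ covers $\partial\mathcal D_4$. The paper states this in Remark~\ref{rem:triangle}, after De Bartolo, without a detailed proof. It is quick to check:
\begin{itemize}
\item By Lemma~\ref{lem:bounds} and $Q_4(s,1)=-s^2(16+3s^2)$, the boundary lies in $\{Q_4=0,\ 1\le k\le\frac73\}$.
\item For fixed $k$ in this range, the two roots in $s^2$ of the quadratic $Q_4(\cdot,k)$ are $9\rho(\rho-1)^2$ at the two real roots of $3\rho^2-2\rho+k-2=0$; their sum and product match the coefficients.
\item The conditions $s^2\ge0$ and $k\ge1$ then force $\rho\in[0,1]$.
\end{itemize}

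One slip: in your display for $g$ the second term should be $\frac14(3r^4-2r^2+1)^2$, since $k-3=-(3r^4-2r^2+1)$. As written it would give $0$ at the node and $32/27$ at the cusps, although the values you actually quote are the correct ones.
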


\begin{proof}
$\JB_4=\frac23s^2+\frac16(k-3)^2$ has its only unconstrained
critical point at $(0,3)\notin\overline{\mathcal D_4}$, so
its extrema and its critical values on the support lie on
$\{Q_4=0\}$. Eliminating $s$ from $Q_4=0$ and the Lagrange
condition gives $(k-2)(k-1)^2(3k-7)^3(k^2+33k-54)=0$; the
admissible points give the four listed values, and the
extreme ones are $J_4^-=1/6$ and $J_4^+=26/27$. For
\eqref{eq:FJB4}, $\JB_4\le x$ if and only if
$|k-3|\le(6x-4s^2)^{1/2}$; since $k_3\le 7/3<3$, the slice is
$(y,k_3)$, and \citet[236.00]{ByrdFriedman1971} gives the
inner integral.
\end{proof}

\begin{figure}[tbp]
\centering
\includegraphics[width=\textwidth]{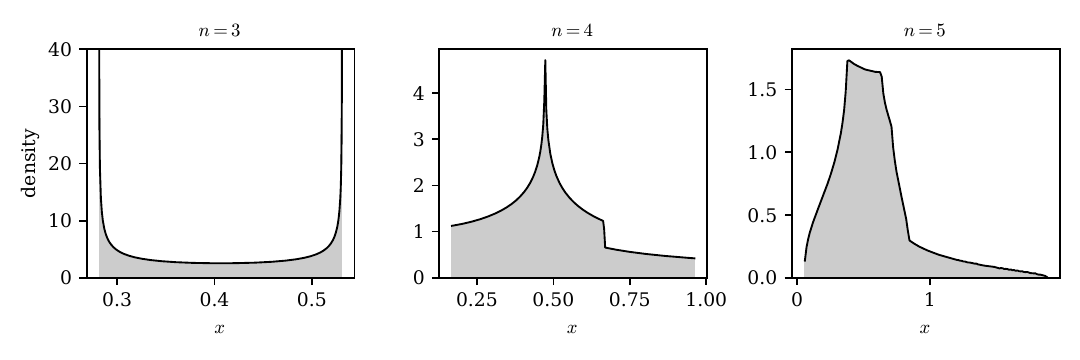}
\caption{Exact densities of $\JB_n$ (solid) and histograms
of $10^6$ ($n=3$) or $10^7$ ($n=4,5$) simulated values. For
$n=4$ the density is obtained by differentiating
\eqref{eq:FJB4}; note the logarithmic singularity at
$x_*\approx0.473$ and the jump at $2/3$. For $n=5$ it is
computed from Theorem~\ref{thm:n5}.}
\label{fig:JB}
\end{figure}

The critical values in Proposition~\ref{prop:JB4} are exactly
where the density of $\JB_4$ is not smooth, and its singular
behaviour there can be found exactly
(Figure~\ref{fig:JB}).

\begin{proposition}\label{prop:JB4sing}
On $\Omega_4$, $\JB_4$ has exactly $50$ stationary points,
all nondegenerate: $12$ minima at the permutations of
$2^{-1/2}(0,0,1,-1)'$, where $\JB_4=1/6$ and the Hessian has
eigenvalues $8/3$ and $32/3$; $24$ saddle points, where
$\JB_4=x_*$ and the Hessian has determinant
$\det H_*=(5286656\sqrt{145}-63660800)/27\approx-38.2137$;
$6$ local maxima at the
permutations of $\frac12(1,1,-1,-1)'$, where $\JB_4=2/3$ and
the Hessian is $-\frac{16}3$ times the identity; and the $8$
global maxima of Proposition~\ref{prop:tail}. Consequently
$f_{\JB_4}$ is infinitely differentiable on $(1/6,26/27)$
except at $x_*$ and $2/3$, and
\[
f_{\JB_4}(\tfrac16+)=\tfrac98,\qquad
f_{\JB_4}(\tfrac23-)-f_{\JB_4}(\tfrac23+)=\tfrac9{16},\qquad
f_{\JB_4}(\tfrac{26}{27}-)=\tfrac{27}{64},
\]
\[
f_{\JB_4}(x)=-\frac{12}{\pi\,|\det H_*|^{1/2}}\,
\log|x-x_*|+O(1)\approx-0.618\log|x-x_*|
\qquad(x\to x_*).
\]
\end{proposition}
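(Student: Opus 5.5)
We work on the two-dimensional sphere $\Omega_4$ and treat $\JB_4$ as a Morse function on it. The one-dimensional density is the push-forward of uniform measure, with $Z_4=\pi$ and total area $4\pi$. Mulholland's theory, in the local form already used in Proposition~\ref{prop:tail}, then ties the singular behaviour of $f_{\JB_4}$ to the stationary points.

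\emph{Locating the stationary points.} The map $u\mapsto(S_4,K_4)$ is a local diffeomorphism wherever the Vandermonde is nonzero, that is, where all $u_i$ are distinct. At such points $d\JB_4=0$ forces $\nabla_{(s,k)}\JB_4=0$. The only such point is $(0,3)$, which lies outside the support by Proposition~\ref{prop:JB4}. So every stationary point has a coincident pair of coordinates. I split into the three collision patterns.

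\begin{itemize}
\item One coincident pair $(x,x,y,z)$. The image is a boundary edge of Remark~\ref{rem:triangle}, and stationarity on $\Omega_4$ reduces to tangency of a level ellipse of $\JB_4$ with that edge. By Proposition~\ref{prop:JB4} this gives $(0,2)$ with value $1/6$ and the lower-edge tangency with value $x_*$. The point $(0,2)$ is attained by the pattern $(0,0,1,-1)/\sqrt2$, giving $4!/2=12$ permutations. The tangency points are generic one-pair configurations. Counting $6$ choices of pair, $2$ orderings of the remaining coordinates, and $2$ mirror images $\pm s$ gives $24$.
\item Two pairs $(x,x,-x,-x)$, the node, with $3\cdot 2=6$ points.
\item Triple or single-outlier patterns, the cusps, with $8$ points.
\end{itemize}

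Here I must justify the multiplicities and check that no extra stationary points occur. Each point of a one-pair edge has a neighbourhood in $\Omega_4$ that folds onto the support, so the function $\JB_4\circ(s,k)$ restricted to the fold curve is stationary exactly when its restriction to the edge is. The total $12+24+6+8=50$ is consistent with the Euler characteristic, since minima minus saddles plus maxima gives $12-24+14=2$. I will use this as a check.

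\emph{Hessians.} At the minima and at the node I parametrise by explicit great circles, as in the proof of Proposition~\ref{prop:tail}. At the node the stabiliser is large: it contains the transpositions within pairs and the pair swap, acting on the two-dimensional tangent plane by a dihedral group with an irreducible action. So Schur's lemma gives a scalar Hessian, and one curve computation yields $-16/3$. At the minima the stabiliser is only $\mathbb Z_2\times\mathbb Z_2$, so I would compute the two eigenvalues along the coordinate-symmetric and antisymmetric directions, expecting $8/3$ and $32/3$. The saddle determinant is the hardest computation. I would pick the specific configuration, with $k=\frac32(\sqrt{145}-11)$ and $s$ recovered from $Q_4=0$, compute $p_3,p_4$ and their first and second derivatives in local coordinates by computer algebra, and reduce in $\mathbb Q(\sqrt{145})$.

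\emph{Local density behaviour.} For a nondegenerate critical point on a surface with Hessian $H$, the standard Morse-lemma computation (uniform density $1/(4\pi)$) contributes the following. At a local extremum it contributes a jump of $2\pi/|\det H|^{1/2}\cdot(4\pi)^{-1}=1/(2|\det H|^{1/2})$. At a saddle it contributes $-\log|x-x_*|/(2\pi|\det H|^{1/2})\cdot(4\pi)^{-1}\cdot 4\pi$ per point. More carefully, the saddle density near $x_*$ is $\frac{1}{4\pi}\cdot\frac{2}{|\det H|^{1/2}}\,(-\log|x-x_*|)$. Summing over the relevant points gives the following.
\begin{itemize}
\item Minima: $12\cdot\frac{1}{2\sqrt{(8/3)(32/3)}}\cdot$ normalisation, which gives $9/8$.
\item Node: $6\cdot\frac{1}{2}\cdot\frac{3}{16}=9/16$.
\item Saddles: $24\cdot\frac{1}{2\pi|\det H_*|^{1/2}}=12/(\pi|\det H_*|^{1/2})$.
\item Cusps: this case agrees with the $n=4$ case of \eqref{eq:tailden}, giving $27/64$.
\end{itemize}
Away from critical values the coarea formula with a nonvanishing gradient gives a $C^\infty$ density. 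The main obstacle is rigorously excluding further stationary points on the fold edges together with the saddle Hessian algebra. I would attack both by Lagrange multipliers directly on $\Omega_4$ in the coordinates $(x,y,z)$ with $2x+y+z=0$, solved via resultants.
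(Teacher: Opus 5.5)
Your proposal is correct and follows essentially the same route as the paper: the Vandermonde argument forces coincident coordinates, the rank-one behaviour of $d\Phi_u$ over smooth points of $Q_4=0$ reduces the one-pair case to the boundary critical points of Proposition~\ref{prop:JB4}, and the local contributions $1/(2|\det H|^{1/2})$ and $-\{2\pi|\det H|^{1/2}\}^{-1}\log|x-x_*|$ summed over the $12$, $24$, $6$ and $8$ points give the stated constants. Two small points to tidy: you should state explicitly why the two-pair directions are stationary (there $d\Phi_u=0$, since only two distinct coordinate values occur), and the pair swap fixes $\tfrac12(1,1,-1,-1)'$ only after composing it with $u\mapsto-u$, which is harmless because $\JB_4$ is even.
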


\begin{proof}
Write $\JB_4=J\circ\Phi$ with $\Phi(u)=(S_4,K_4)$ and
$J(s,k)=\frac23s^2+\frac16(k-3)^2$. Where the coordinates of
$u$ are distinct, $d\Phi_u$ maps the tangent space onto $\R^2$
(the Jacobian of $(p_1,\dots,p_4)$ is a Vandermonde
determinant), so $u$ can be stationary only if
$\nabla J\{\Phi(u)\}=0$, that is, $\Phi(u)=(0,3)$, which is
outside the support. Stationary points therefore have
coincident coordinates and lie over the curve $Q_4=0$. Over a
smooth point of the curve, $d\Phi_u$ has rank one with image
tangent to the curve, so $u$ is stationary if and only if
$\Phi(u)$ is a critical point of $J$ on the curve; by the
proof of Proposition~\ref{prop:JB4} this gives the values
$1/6$ and $x_*$. Over the node and the cusps $d\Phi_u=0$;
their preimages are the two-pair and single-outlier
directions. At a saddle, write the configuration as
$(x,x,y,z)$ up to permutation. Stationarity gives
$12x^4+7x^2-2=0$, so $x^2=(\sqrt{145}-7)/24$, and the
constraints $2x+y+z=0$, $2x^2+y^2+z^2=1$ give
$y,z=-x\pm\{(1-4x^2)/2\}^{1/2}$. Two signs of $x$, six
positions for the repeated pair and two assignments of $y,z$
give $24$ saddles; the other counts are $4!/2!=12$,
$4!/(2!\,2!)=6$ and $2\times4=8$. Differentiation on the sphere
gives the stated Hessians; $\det H_*$ is the negative root of
$27d^2+127321600d+4865392640=0$. As a check,
$12-24+6+8=2$ is the Euler characteristic of the
$2$-sphere. By the local analysis of
\citet[Sections~2--3]{Mulholland1970}, equivalently the Morse-lemma
argument in the proof of Proposition~\ref{prop:tail} with
$\Omega_4$ of dimension two and uniform density $1/(4\pi)$, a
nondegenerate local extremum with Hessian $H$ contributes a
step of height $1/(2|\det H|^{1/2})$ to the density at its
value, a saddle contributes
$-\{2\pi|\det H|^{1/2}\}^{-1}\log|x-x_*|$, and the remainder
is continuous. Summing over the stationary points gives the
stated values.
\end{proof}

\subsection{\texorpdfstring{$n=5$}{n=5}}\label{sec:n5}

For $n=5$ put $c=p_5$. Newton's identities give
$e_5=c/5-a/6$ and
\begin{equation}\label{eq:P5}
P_5(z)=R(z)-t,\quad
R(z)=z^5-\tfrac12z^3-\tfrac a3z^2+\tfrac{1-2b}8z,\quad
t=\tfrac c5-\tfrac a6 .
\end{equation}
$\Disc(P_5)$ is a quartic in $c$ with leading coefficient $5$;
it is displayed in Appendix~\ref{app:disc}.

\begin{theorem}\label{thm:n5}
Let $(a,b)$ be such that $R'$ has four simple real zeros
$w_1<w_2<w_3<w_4$ (local maximum, minimum, maximum, minimum
of $R$), with values $v_j=R(w_j)$ satisfying
$\max(v_2,v_4)<\min(v_1,v_3)$. Let $r_1<r_2<r_3<r_4$ be the
numbers $5(v_j+a/6)$ in increasing order. Then $(a,b)$ is
an interior point of the support of $(p_3,p_4)$,
$\Disc(P_5)=5\prod_j(c-r_j)$, the fibre is $c\in[r_2,r_3]$,
and
\begin{equation}\label{eq:f5}
f_5(a,b)=\frac{2\,\EK(m)}
{\pi^2\{(r_3-r_1)(r_4-r_2)\}^{1/2}},\qquad
m=\frac{(r_3-r_2)(r_4-r_1)}{(r_3-r_1)(r_4-r_2)} .
\end{equation}
If $R'$ has fewer than four real zeros, or
$\max(v_2,v_4)>\min(v_1,v_3)$, then $(a,b)$ lies outside the
support and $f_5(a,b)=0$; equality describes the support
boundary, a null set. The density of $(S_5,K_5)$ is
$h_5(s,k)=5^{-3/2}f_5(s/\sqrt5,k/5)$.
\end{theorem}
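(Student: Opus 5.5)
The plan is to specialise Theorems~\ref{thm:disc} and~\ref{thm:fibre} to $n=5$, where there is a single fibre variable $c=p_5$ and $q=(a,b)$. I will first check that \eqref{eq:P5} is the $n=5$ instance of the decomposition $P=R_q-t$. Newton's identities with $p_1=0$, $p_2=1$ give $e_1=0$, $e_2=-\frac12$, $e_3=a/3$ and $e_4=(1-2b)/8$. This yields $R(z)=z^5-\frac12z^3-\frac a3z^2+\frac{1-2b}8z$ and $\gamma(q)=-a/6$, so $t=c/5-a/6$.

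For the fibre, since $R$ has positive leading coefficient, its four critical points alternate maximum, minimum, maximum, minimum. In the notation of Theorem~\ref{thm:fibre} this gives $L=\max(v_2,v_4)$ and $U=\min(v_1,v_3)$.

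\begin{itemize}
\item Part~(i) says the fibre is $c\in[r_-,r_+]$, nonempty exactly when $L<U$.
\item The hypothesis $\max(v_2,v_4)<\min(v_1,v_3)$ forces both minimum values to lie below both maximum values. So, ordering the $r_j=5(v_j+a/6)$, the two smallest come from the minima and the two largest from the maxima. Hence $r_-=r_2$ and $r_+=r_3$.
\item Part~(ii) gives $|\Disc(P_5)|=5\prod_j|c-r_j|$. On $(r_2,r_3)$ exactly two factors $c-r_j$ are negative, and the discriminant is positive there because all roots are real. Since $\Disc$ is a quartic in $c$ with leading coefficient $5$, the signed identity $\Disc(P_5)=5\prod_j(c-r_j)$ follows.
\end{itemize}

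To evaluate the inner integral, I will not use the $F_D$ form. Instead I compute
\[
\int_{r_2}^{r_3}\frac{dc}{\{5(c-r_1)(c-r_2)(r_3-c)(r_4-c)\}^{1/2}}
\]
directly by the standard quartic reduction \citep[256.00 or 254.00]{ByrdFriedman1971}. This gives $2\EK(m)/\{(r_3-r_1)(r_4-r_2)\}^{1/2}$ with $m=(r_3-r_2)(r_4-r_1)/\{(r_3-r_1)(r_4-r_2)\}$. One could equivalently reduce the Appell $F_1$ of \eqref{eq:FD} to a ${}_2F_1$. I would check the normalisation numerically in the degenerate limit $r_1\to r_2$, $r_4\to r_3$, where both sides should blow up logarithmically with matching coefficient.

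The constant is then fixed by \eqref{eq:Zn} at $n=5$: $Z_5=\pi^2/(\sqrt5\,\Gamma(2))=\pi^2/\sqrt5$. The factor $\sqrt5$ cancels the $5^{-1/2}$ from the discriminant, and together with the integral above this produces \eqref{eq:f5}.

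For the remaining assertions:

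\begin{itemize}
\item \textbf{Interior point.} $f_5>0$ on the open set where the strict inequalities hold, and the defining conditions are open in $(a,b)$. So such $(a,b)$ lie in the interior of the support.
\item \textbf{Exterior points.} If $R'$ has fewer than four real zeros, or $L>U$, the fibre is empty by Theorem~\ref{thm:fibre}(i), so $f_5=0$ there. Since $f_5$ is a density given a.e.\ by \eqref{eq:fn}, such points are outside the support after taking closure. The exterior set is open, so these points are not in the closure of the positive set.
\item \textbf{Boundary.} The equality case $L=U$, or repeated critical points, is a proper real-algebraic condition in $(a,b)$ and hence null.
\item \textbf{Scaling.} The formula for $h_5$ is the last sentence of Theorem~\ref{thm:disc}.
\end{itemize}

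I expect the main obstacle to be the elliptic reduction: choosing the correct Byrd--Friedman formula for the interval $(r_2,r_3)$ between the middle two roots and verifying its modulus and prefactor. If the table lookup is unclear, the fallback is the M\"obius substitution sending $r_1,r_2,r_3,r_4$ to $\infty,0,1,1/m$, which turns the integral into $\int_0^1\{x(1-x)(1-mx)\}^{-1/2}dx=2\EK(m)$ and lets the Jacobian be tracked by hand.
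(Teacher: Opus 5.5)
Your proposal is correct and follows the paper's proof: specialise Theorem~\ref{thm:fibre} to $n=5$ with $\gamma=-a/6$, identify $r_-=r_2$ and $r_+=r_3$, use $Z_5=\pi^2/\sqrt5$, and reduce the quartic integral over $(r_2,r_3)$ to $2\EK(m)\{(r_3-r_1)(r_4-r_2)\}^{-1/2}$. The paper cites Gradshteyn--Ryzhik 3.147.4 where you use Byrd--Friedman or the M\"obius map to $\infty,0,1,1/m$, which is the same computation, and your arguments for the sign of $\Disc(P_5)$ and for the interior and exterior support claims via openness supply details the paper leaves implicit.
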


\begin{proof}
This is Theorem~\ref{thm:fibre} with $n=5$ and
$\gamma=-a/6$: the two local-minimum values give $r_1,r_2$
and the two local-maximum values give $r_3,r_4$, so
$r_-=r_2$ and $r_+=r_3$. By Lemma~\ref{lem:Zn},
$Z_5=\pi^2/\sqrt5$, and \eqref{eq:fn} becomes
$f_5=(\sqrt5/\pi^2)\int_{r_2}^{r_3}\{5\prod_j(c-r_j)\}^{-1/2}
\,dc$. The classical reduction
\citep[3.147.4]{GradshteynRyzhik2007} gives
$\int_{r_2}^{r_3}\{\prod_j|c-r_j|\}^{-1/2}dc
=2\EK(m)\{(r_3-r_1)(r_4-r_2)\}^{-1/2}$.
\end{proof}

On the interior, \eqref{eq:f5} is the density; no further
case distinction is needed. Its form explains the
singularities of $h_5$ (Figure~\ref{fig:support}b). At a
regular boundary point $r_2\uparrow r_3$, so $m\to0$ and $h_5$
has a jump to a positive
limit; this contrasts with the inverse-square-root boundary
behaviour for $n=3,4$. In the interior, $h_5$ has
logarithmic singularities along the curves $v_2=v_4$ and
$v_1=v_3$, where $m\to1$; there two pairs of residuals
coincide. In particular $h_5$ is bounded near every smooth
boundary point, and the loss of integrable singularities at
the boundary is the smoothing effect of the one-dimensional
fibre integral.

\section{The distribution of \texorpdfstring{$\JB_n$}{JBn}}
\label{sec:JB}

\begin{proposition}\label{prop:polar}
If $(S_n,K_n)$ has density $h_n$, then at continuity points
$x>0$,
\begin{equation}\label{eq:polar}
f_{\JB_n}(x)=\frac6n\int_0^{2\pi}h_n\Bigl(
\bigl(\tfrac{6x}{n}\bigr)^{1/2}\cos\theta,\;
3+2\bigl(\tfrac{6x}{n}\bigr)^{1/2}\sin\theta\Bigr)d\theta .
\end{equation}
\end{proposition}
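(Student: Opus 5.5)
The plan is a polar change of variables in the $(s,k)$ plane adapted to the ellipses $\JB_n=x$. Put $s=r\cos\theta$ and $k=3+2r\sin\theta$ with $r\ge0$ and $\theta\in[0,2\pi)$. This is an affine image of standard polar coordinates: the linear map $(s,k-3)\mapsto(s,(k-3)/2)$ has Jacobian $1/2$, so $ds\,dk=2r\,dr\,d\theta$. In these coordinates $S_n^2+\tfrac14(K_n-3)^2=r^2$, so by \eqref{eq:JB} $\JB_n=\frac n6 r^2$. Equivalently $r=(6x/n)^{1/2}$ on the level set $\JB_n=x$.

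First I write the distribution function, for $x>0$, as
\[
F_{\JB_n}(x)=\int_0^{2\pi}\!\!\int_0^{(6x/n)^{1/2}} h_n(r\cos\theta,3+2r\sin\theta)\,2r\,dr\,d\theta .
\]
This only uses the existence of a density $h_n$ and Fubini/Tonelli for the nonnegative integrand. Next I substitute $x'=\frac n6r^2$, so that $2r\,dr=\frac6n\,dx'$. This gives
\[
F_{\JB_n}(x)=\int_0^x g(x')\,dx',\qquad g(x')=\frac6n\int_0^{2\pi}h_n\Bigl((\tfrac{6x'}{n})^{1/2}\cos\theta,3+2(\tfrac{6x'}{n})^{1/2}\sin\theta\Bigr)d\theta .
\]
So $g$ is a version of the density of $\JB_n$, and it coincides with the right-hand side of \eqref{eq:polar}.

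It remains to pass from ``a version of the density'' to the identity at continuity points, which I expect to be the only delicate step. The clean reading is that $f_{\JB_n}$ is defined only up to null sets, and at points $x$ where $g$ is continuous, Lebesgue differentiation gives $F'_{\JB_n}(x)=g(x)$ exactly. So if $f_{\JB_n}$ denotes the density that is continuous at $x$, the two must agree there.

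If one instead wants continuity of the angular integral itself, I would argue by dominated convergence in $\theta$. Where $h_n$ is bounded and continuous near the ellipse (for instance $n=5$ away from the logarithmic curves, or $n\ge6$), this is direct. For $n=4$ the inverse-square-root singularities of $h_4$ at the support boundary are integrable along transversal ellipses. Non-transversal ellipses, such as tangencies or passage through the node or cusps, are exactly the critical values identified in Proposition~\ref{prop:JB4}, and these are excluded as non-continuity points. So I would state the result at continuity points of the right-hand side and invoke the fundamental theorem of calculus for the absolutely continuous $F_{\JB_n}$.
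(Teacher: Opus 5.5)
Your proof is correct and uses the same polar change of variables as the paper. The paper scales the radius by $(6/n)^{1/2}$ so that $\JB_n=r^2$ immediately, whereas you substitute $x'=\frac n6 r^2$ in a second step; your fundamental-theorem-of-calculus remark also makes explicit the ``continuity points'' qualifier that the paper leaves implicit. Your optional aside that $h_n$ is bounded and continuous near the ellipse for $n\ge6$ is not established in the paper, which leaves the singularity structure for $n\ge6$ open, but your main argument does not depend on it.
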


\begin{proof}
Put $s=(6/n)^{1/2}r\cos\theta$ and
$k-3=2(6/n)^{1/2}r\sin\theta$, so that $\JB_n=r^2$ and
$ds\,dk=(12r/n)\,dr\,d\theta=(6/n)\,dx\,d\theta$ with
$x=r^2$.
\end{proof}

Table~\ref{tab:quant} reports the range and upper quantiles
of $\JB_n$. For $n=3$ they follow from
Proposition~\ref{prop:n3}; for $n=4$ from \eqref{eq:FJB4},
evaluated in $25$-digit arithmetic, which gives
$0.7552955$, $0.8519539$ and $0.9395652$. For $n=5$,
quadrature of Theorem~\ref{thm:n5} over a $3200^2$ grid in
$(a,b)$ gives $0.94422$, $1.21846$ and $1.62753$, and $10^8$
Monte Carlo replications give $0.94414$, $1.21850$ and
$1.62731$; the two agree to within $3\times10^{-4}$, and
Table~\ref{tab:quant} reports three decimals.
For $n\ge6$ the entries are simulated. The simulated values
at $n=30$ and $75$ agree to within $0.01$ with the
response-surface values of \citet{Lawford2005}.

\begin{table}[tbp]
\centering
\caption{Range and upper quantiles of $\JB_n$ under
normality, and size of the asymptotic $5\%$ test. The range
endpoints $J_n^\pm$ are exact (Theorem~\ref{thm:range}). For
$n\le4$ the quantiles are computed from the exact distribution
functions of Section~\ref{sec:small}; for $n=5$ they are given
to the three decimals on which quadrature of the exact density
of Theorem~\ref{thm:n5} and simulation agree (see text). Rows
$n\ge6$ are based on $2\times10^7$ ($10^8$ for $n=7$; $10^7$ for
$n=75$) replications; the size is zero for $n\le6$ by
Corollary~\ref{cor:size}. Asymptotic ($\chi^2_2$) quantiles:
$4.605$, $5.991$, $9.210$.}
\label{tab:quant}
\small
\input{table1.tex}
\end{table}

\paragraph{Checks against exact moments.}
Table~\ref{tab:mom} compares product moments computed from
\eqref{eq:h4} and \eqref{eq:f5} with exact rational values.
The exact values need no distribution theory. Write
$Y=H'W$, where $H$ is the $(n-1)\times n$ Helmert contrast
matrix and $W\sim N(0,I_{n-1})$. By Lemma~\ref{lem:sphere},
for a homogeneous polynomial $q$ of degree $d$,
$\E q(U)=\E q(Y)/\E\|Y\|^d$, where
$\E\|Y\|^d=2^{d/2}\Gamma\{(n-1+d)/2\}/\Gamma\{(n-1)/2\}$.
The numerator is a finite sum of Gaussian moments of $W$.
The resulting values coincide with those given by NHN's
recurrence and, for $\E S_n^2$ and the first two moments of
$K_n$, with the classical formulae going back to
\citet{Fisher1930}.
For $n=4$ the integrals were evaluated by adaptive
quadrature after the substitution
$k=k_2+(k_3-k_2)\sin^2\phi$, which removes the boundary
singularity; the agreement is to about $10^{-12}$.

\begin{table}[tbp]
\centering
\caption{Product moments of $(S_n,K_n)$: exact values by
spherical integration (see text) and values computed from the
densities of Theorems~\ref{thm:n4} and~\ref{thm:n5}. For $n=5$,
midpoint quadrature on a $3200^2$ grid; its error, about
$10^{-4}$, reflects the logarithmic singular curves of
Figure~\ref{fig:support}b.}
\label{tab:mom}
\small
\input{table2.tex}
\end{table}

\FloatBarrier
\section{Discussion and open problems}\label{sec:open}

NHN solve a recursion problem, expressing the $n$-sample
density and moments through the $(n-1)$-sample objects; the
present paper solves, for small $n$, a fibre-integration
problem. The recurrence is suited to symbolic moments and
induction in $n$; the discriminant representation displays
support, singularities and special-function structure.
The sequence $n=3,4,5$ has a coherent structure: an arcsine
law, an algebraic joint density whose skewness marginal is
McKay's Gauss hypergeometric function, and a joint density
that is a single elliptic period. For
$n\ge6$, Theorem~\ref{thm:fibre} still gives a Lauricella
period at the innermost level, but the outer $(n-5)$-fold
integral does not appear to preserve hypergeometric form.
Two problems seem both natural and tractable.

\begin{openproblem}[Deterministic evaluation for moderate
$n$]\label{op:holonomic}
For each fixed $n$, $f_{\JB_n}$ is represented by period
integrals of an algebraic family, suggesting that it should
be holonomic: that it satisfies a linear differential equation
in $x$ with polynomial coefficients. The moments $\E\JB_n^r$
would then typically satisfy a linear recurrence in $r$ with polynomial
coefficients, which can be guessed from exact moments
computed by NHN's recurrence or by the spherical integration
used for Table~\ref{tab:mom}. Such
an equation would allow evaluation of $F_{\JB_n}$ by the
holonomic gradient method \citep{Nakayama2011,Hashiguchi2013},
replacing simulation in the calibration problem of
\citet{DebSefton1996} and \citet{Lawford2005}. Determine the
order of the equation as a function of $n$ and whether it is
ever of hypergeometric (first-order recurrence) type.
\end{openproblem}

\begin{openproblem}[Singularity classification]
\label{op:sing}
Classify the singularities of $f_{\JB_n}$ and of $h_n$. For
$f_{\JB_n}$, the theory of \citet{Mulholland1970} gives the
leading singular term at every nondegenerate stationary point
of $\JB_n$ on $\sph$; Propositions~\ref{prop:tail}
and~\ref{prop:JB4sing} carry this out at the maximum for
every $n$ and completely for $n=4$. For $n\ge6$ the minimum
set $\{S_n=0,K_n=3\}$ has positive dimension, so the
stationary points are no longer isolated, and the remaining
stationary points have not been classified. For the joint
density the classification is open beyond the cases treated
here: for $n=4$ the singularities are inverse-square-root at
the edges of the support, and for $n=5$ the density jumps at
the boundary and is logarithmic along interior curves
(Section~\ref{sec:n5}).
\end{openproblem}

The same spherical mechanism persists for Gaussian regression
residuals, whose direction is uniform on the unit sphere of
the residual subspace. The obstruction is then algebraic
rather than probabilistic: the coordinate power sums depend on
the orientation of that subspace relative to the coordinate
axes. Characterising the designs for which a tractable
discriminant or period representation survives would extend
the present geometry from one-sample normality testing to
regression diagnostics.

\appendix
\section{The \texorpdfstring{$n=5$}{n=5} discriminant}
\label{app:disc}

With $P_5$ as in \eqref{eq:P5},
\begin{align*}
\Disc(P_5)={}&5c^4-\tfrac{35a}{3}c^3
-\tfrac{1}{400}\bigl(1000a^2b-4200a^2+1000b^2-550b+79\bigr)
c^2\\
&+\tfrac{a}{720}\bigl(64a^4+2580a^2b-3234a^2+1200b^3
+1620b^2-1116b+165\bigr)c\\
&-\tfrac{1}{13824}\bigl(1024a^6+288a^4b^2+16992a^4b
-10872a^4+17472a^2b^3+4464a^2b^2\\
&\qquad-5904a^2b+948a^2+3456b^5-6912b^4+5400b^3
-2052b^2+378b-27\bigr).
\end{align*}
This expression, \eqref{eq:Q4} and every other algebraic
identity used in the paper were verified symbolically in
SymPy; the script is part of the supplementary material. By Theorem~\ref{thm:fibre}(ii) its four roots in $c$ are $5(v_j+a/6)$, where the $v_j$ are the critical values of $R$ in \eqref{eq:P5}.

\section{Computation}\label{app:comp}

All densities were checked against simulation from the null
using $U_i=(Z_i-\bar Z)/\{\sum_j(Z_j-\bar Z)^2\}^{1/2}$ with
$Z_i$ independent standard normal. For $n=5$, the zeros of
$R'$ are computed as eigenvalues of the companion matrix and
\eqref{eq:f5} is evaluated with the complete elliptic
integral. Against $10^7$ simulated pairs $(S_5,K_5)$, binned
in $138$ interior boxes of side $0.02$ away from the boundary
and the singular curves, the largest standardised discrepancy
is $2.42$ and $97.8\%$ are below $1.96$ in absolute value.
The exact moments in
Table~\ref{tab:mom} were computed by the spherical
integration described in Section~\ref{sec:JB}. Polynomial
roots were computed as companion-matrix eigenvalues, elliptic
integrals with SciPy and mpmath, and one-dimensional integrals
by adaptive (QUADPACK or tanh--sinh) quadrature. The exact
elimination for the stationary points in
Proposition~\ref{prop:JB4sing} was carried out in SymPy; the
lower endpoint $J_5^-=5/96$ is proved analytically in
Theorem~\ref{thm:range}, and the supplementary code also
checks it independently. Scripts that reproduce every table,
figure and numerical statement, with the software versions
used, are provided as supplementary material.

{\small
\bibliographystyle{plainnat}
\bibliography{ms}
}

\end{document}

%% file: table1.tex
\begin{tabular}{rrrrrrr}
\toprule
$n$ & $J_n^-$ & $J_n^+$ & $90\%$ & $95\%$ & $99\%$
 & size at $5.991$\\
\midrule
3 & $9/32$ & $17/32$ & 0.5251 & 0.5297 & 0.5312 & 0\\
4 & $1/6$ & $26/27$ & 0.7553 & 0.8520 & 0.9396 & 0\\
5 & $5/96$ & $725/384$ & 0.944 & 1.219 & 1.627 & 0\\
\midrule
6 & 0 & 3.560 & 1.099 & 1.553 & 2.482 & 0\\
7 & 0 & 6.230 & 1.252 & 1.842 & 3.372 & $1.2\times10^{-5}$\\
8 & 0 & 10.15 & 1.391 & 2.092 & 4.227 & 0.0024\\
10 & 0 & 22.74 & 1.622 & 2.520 & 5.694 & 0.0088\\
20 & 0 & 245.7 & 2.352 & 3.802 & 9.764 & 0.0244\\
30 & 0 & 918.6 & 2.744 & 4.406 & 11.33 & 0.0310\\
75 & 0 & 16219 & 3.484 & 5.274 & 12.58 & 0.0404\\
\bottomrule
\end{tabular}

%% file: table2.tex
\begin{tabular}{lrrrr}
\toprule
 & \multicolumn{2}{c}{$n=4$} & \multicolumn{2}{c}{$n=5$}\\
\cmidrule(lr){2-3}\cmidrule(lr){4-5}
Moment & Exact & from \eqref{eq:h4} & Exact
 & from \eqref{eq:f5}\\
\midrule
$1$ & $1$ & $1.0000000000$ & $1$ & $1.00001$\\
$\E S^2$ & $12/35$ & $0.3428571429$ & $3/8$ & $0.37498$\\
$\E K$ & $9/5$ & $1.8000000000$ & $2$ & $2.00000$\\
$\E S^4$ & $1296/5005$ & $0.2589410589$ & $81/224$ & $0.36157$\\
$\E K^2$ & $353/105$ & $3.3619047619$ & $17/4$ & $4.24994$\\
$\E S^2K$ & $276/385$ & $0.7168831169$ & $15/16$ & $0.93744$\\
\bottomrule
\end{tabular}